\documentclass[12pt]{amsart}
\usepackage{amssymb,amsmath,amsfonts,   amsthm,nomencl,mathrsfs} 
\usepackage[arrow, matrix, curve]{xy}
\usepackage[latin1]{inputenc}
\usepackage{a4wide}
\setlength{\parindent}{0pt}

\newcommand{\mal}{\mathscr{M}}

\newcommand{\IR}{\mathbb{R}}

\newcommand{\question}[1]{\leavevmode{\marginpar{\tiny%
$\hbox to 0mm{\hspace*{-0.5mm}$\leftarrow$\hss}%
\vcenter{\vrule depth 0.1mm height 0.1mm width \the\marginparwidth}%
\hbox to 0mm{\hss$\rightarrow$\hspace*{-0.5mm}}$\\\relax\raggedright #1}}}

\newcommand{\sm}{\sim_b}

\newcommand{\IFF}{\mathscr{F}}

\renewcommand{\c}{ \mathrm{cpt} }

\newcommand{\IL}{L}

\newcommand{\dom}{\mathrm{Dom}}

\newcommand{\IN}{\mathbb{N}}

\newcommand{\IP}{\mathbb{P}}

\newcommand{\Id}{{\rm d}}

\newcommand{\f}{\frac}
\newcommand{\nn}{\nonumber}



\theoremstyle{plain}            
\newtheorem{theorem}{theorem}[section]

\newtheorem{Corollary}[theorem]{Corollary}
\newtheorem{Theorem}[theorem]{Theorem}

\newtheorem{Propandef}[theorem]{Proposition and definition}

\theoremstyle{definition}       
\newtheorem{Definition}[theorem]{Definition}

\newtheorem{Remark}[theorem]{Remark}
\newtheorem{Example}[theorem]{Example}

\allowdisplaybreaks[1]

\begin{document}

\begin{titlepage} \setcounter{page}{1}
\title[On the geometry of semiclassical limits]{On the geometry of semiclassical limits on Dirichlet spaces}

\author[B. G\"uneysu]{Batu G\"uneysu}

\end{titlepage}

\maketitle 

\begin{abstract} This paper is a contribution to semiclassical analysis for abstract Schrödinger type operators on locally compact spaces: Let $X$ be a metrizable seperable locally compact space, let $\mu$ be a Radon measure on $X$ with a full support. Let $(t,x,y)\mapsto p(t,x,y)$ be a strictly positive pointwise consistent $\mu$-heat kernel, and assume that the generator $H_p\geq 0$ of the corresponding self-adjoint contraction semigroup in $\IL^2(X,\mu)$ induces a regular Dirichlet form. Then, given a function  $\Psi : (0,1)\to (0,\infty)$ such that the limit $\lim_{t\to 0+}p(t,x,x)\Psi (t)$ exists for all $x\in X$, we prove that for every potential $w:X\to \IR$ one has 
$$
\lim_{t \to 0+} \Psi (t)\mathrm{tr}\big(\mathrm{e}^{ -t   H_p   + w}\big)=  \int  \mathrm{e}^{-w(x) }\lim_{t \to 0+}p(t,x,x) \Psi (t) \Id\mu(x)<\infty
$$
for the Schrödinger type operator $H_p + w$, provided $w$ satisfies very mild conditions at $\infty$, that are essentially only made to guarantee that the sum of quadratic forms $ H_p   + w/t$ is self-adjoint and bounded from below for small $t$, and to guarantee that 
$$
 \int  \mathrm{e}^{-w(x) }\lim_{t\to 0+}p(t,x,x) \Psi (t) \Id\mu(x)<\infty.
$$
The proof is probabilistic and relies on a principle of not feeling the boundary for $p(t,x,x)$. In particular, this result implies a new semiclassical limit result for partition functions valid on arbitrary connected geodesically complete Riemannian manifolds, and one also recovers a previously established semiclassical limit result for possibly locally infinite connected weighted graphs. 

\end{abstract}

%

\section{Introduction}

To motivate all following results, let us take a look at the case when $(X,g)$ is either the Euclidean $\IR^m$ or a closed Riemannian $m$-manifold, noting that for the sake of exposition we will not care about self-adjointness issues and other technicalitities for the moment. Given a function $f:\IR\to\IR$, classical results by Helffer/Robert \cite{helfer} (see also \cite{baer2} for closed $X$'s) show that for a sufficiently bounded smooth potential $w:X\to \IR$, one has
\begin{align}\label{bma}
\lim_{\hbar \to 0+} \f{ Z_{\mathrm{QM}}(g;w;f;\hbar)}{ Z_{\mathrm{cl}}(g;w;f;\hbar)} =1,
\end{align}
where
\begin{align*}
Z_{\mathrm{QM}}(g;w;f;\hbar):=\mathrm{tr}\big(f\big( \hbar^2  \Delta_g+w\big)\big)
\end{align*}
denotes the \lq\lq{}quantum partition function\rq\rq{} and  
$$
 Z_{\mathrm{cl}}(g;w;f;\hbar):=\f{1}{(2\pi\hbar)^m} \int_{T^*X} f\big(|p|^2_{g^*} +w(q)\big)  \Id p\wedge \Id q,
$$
denotes its classical analogue. Note that the integral in $Z_{\mathrm{cl}}(g;w;f;\hbar)$ is a globally well-defined integral of a differential form in $\Omega^{2m}(T^*X)$, as $\Id p\wedge \Id q$ is (up to a constant) just the coordinate representation of the $m$-th power of the symplectic form on $T^* X$.\\
The proofs of these results use an asymptotic expansion of $\hbar^{m}Z_{\mathrm{QM}}(g;w;f;\hbar)$ as $\hbar\to 0+$, and which naturally require strong global growth conditions on the potential $w$, in addition to smoothness. As their formulation is built upon cotangent bundles and forms, semiclassical limit results such as (\ref{bma}) for general $f$\rq{}s clearly cannot be formulated in other settings than manifold-like spaces. \\
 On the other hand, the most important choice of $f$ for quantum physics\footnote{Although other choices of $f$ are of course needed in quantum physics: For example, it has been pointed to the author by B. Helffer that the choice $f(\lambda):=S\log(1+\mathrm{e}^{-S/\lambda})$, $S>0$, is used in solid state physics (cf. \cite{helfer2} for a mathematical description of the De Haas - van Halfen effect).} and geometry is $f(\lambda):=\mathrm{e}^{-\lambda}$, where then the numerator in (\ref{bma}) becomes the usual quantum partition function from statistical physics and the denominator its classical analogue. Now (\ref{bma}) reflects the fact every quantum data having a classical analogue should by approximated by the latter as $\hbar\to 0+$. Moreover, in the exponential case the momentum integration (that is, the $p$-integration) in $Z_{\mathrm{cl}}(g;w;f;\hbar)$ factors and becomes a Gaussian integral, leading straightforwardly to the equivalence of (\ref{bma}) to  
\begin{align}\label{bma2}
\lim_{t \to 0+} (2\pi t)^{m/2} \mathrm{tr}\big(\mathrm{e}^{- t( \Delta_g+w/t)}\big)= \int_{X} \mathrm{e}^{-w} \Id\mu_g,
\end{align}
with $\Id\mu_g$ the Riemannian volume measure. This simple observation is used as a tool in a proof given in \cite{simon}, but it is actually much more than just a tool: the above reformulation of (\ref{bma}) does not refer to manifolds at all, as lang as one considers $\Delta_g$ abstractly as the generator of a strongly continuous self-adjoint contraction semigroup on an $L^2$-space. Looking for an abstract formulation of (\ref{bma2}), it clearly  remains to clarify the meaning of the regularizing factor $(2\pi t)^{m/2}$. Of course it is tempting to believe that this factor stems from the on-diagonal singularity of the Euclidean heat kernel. Another aspect of results like (\ref{bma2}) is completely hidden in the Riemannian setting, namely the choice of measure on the RHS of (\ref{bma2}). While in the Riemannian case it is the measure of the underlying Hilbert space, something else happens on weighted graphs: It has been shown recently in \cite{G45}, that for the canonically given self-adjoint Laplacian $\Delta_{b,\mu}$ in $\ell^2(X,\mu)$ on a possibly locally infinite connected weighted graph $(X,b,\mu)$ (cf. Example \ref{graph}) one has 
\begin{align}\label{graf} 
\lim_{t \to 0+}  \mathrm{tr}\big(\mathrm{e}^{-t  \Delta_{b,\mu}+w/t }\big)=  \sum_{x\in X}  \mathrm{e}^{-w(x)}.
\end{align}
What is seemingly different now from the Riemannian case (\ref{bma2}) is that on the right-hand side the integration is not with respect to the Hilbert space measure $\mu$. On the other hand, as for the heat kernel on graphs one has
$$
\lim_{t\to 0+}p_{b,\mu}(t,x,x)=1/\mu(x),
$$
one can rewrite the exponential integral according to
$$
 \sum_{x\in X}  \mathrm{e}^{-w(x)}=\sum_{x\in X}  \mathrm{e}^{-w(x)}\lim_{t\to 0+}p_{b,\mu}(t,x,x)\mu(x).
$$
This lead us to the following observations: Firstly, an abstract formulation of (\ref{bma2}) on a locally compact space $X$ with a well-behaved Radon measure $\mu$ should be built from an abstract $\mu$-heat kernel $p(t,x,y)$ (cf. Section \ref{Main}), replacing the Laplace operator with the self-adjoint generator $H_p$ of the semigroup induced by $p(t,x,y)$. Secondly, taking once more the Riemannian case (\ref{bma2}) into account, where one has 
$$
\lim_{t\to 0+}p_{g}(t,x,x)(2\pi t)^{m/2}=1,
$$
the analysis should be ultimately built assuming the existence of a function  $\Psi : (0,1)\to (0,\infty)$ such that the limit $
\lim_{t\to 0+}p(t,x,x)\Psi (t)$ exists for all $x\in X$. By what we have observed so far, the natural formulation of (\ref{bma2}) now becomes
\begin{align}\label{apodaa}
\lim_{t \to 0+} \Psi (t)\mathrm{tr}\big(\mathrm{e}^{ -t (   H_p   + w/t)}\big)=  \int  \mathrm{e}^{-w(x) }\lim_{t \to 0+}p(t,x,x) \Psi (t) \Id\mu(x). 
\end{align}
Without losing any important example, we will assume that $H_p$ stems from a regular Dirichlet form, and that $p(t,x,y)$ satisfies a probabilistic principle of not feeling the boundary (cf. the new Definition \ref{dpoa}).\vspace{1mm}

\emph{Our main result, Theorem \ref{main}, states that in this situation the formula (\ref{apodaa}) holds for every possibly unbounded continuous potential $w:X\to \IR$, under very mild global assumptions on $w$ that are only made to make all involved quantities well-defined at all.} \vspace{1mm}

As we have indicated above, Theorem \ref{main} is completely new even from a conceptually point of view. We believe it is a remarkable fact that it is possible to formulate such a result in a possibly nonlocal Dirichlet space setting, allowing to treat Riemannian manifolds and weighted graphs simultaniously. As a corollary to this result, we obtain a new semiclassical limit result for arbitrary complete connnected Riemannian manifolds (Corollary \ref{pmyy}), which is seemingly the first result of this type for noncompact manifolds other than $\IR^m$. In addition, Corollary \ref{pmyy} seems to be even new in the Euclidean $\IR^m$, in the sense that the conditions on the potential are weaker than in all previously known results. It is important to note that Corollary \ref{pmyy} does not impose any curvature conditions on the geometry. On the other hand this result can be significantly refined in case the Ricci curvature is bounded from below, as is shown in Corollary \ref{Ricci}, using the volume doubling machinery. Finally, we prove that Theorem \ref{main} allows to recover the formula (\ref{graf}) for graphs (Corollary \ref{graphh}).\\
We close this section with some remarks about the proof of Theorem \ref{main}: In \cite{baer2,helfer} the authors develop an asymptotic expansion of the underlying trace in order to prove the semiclassical limit (see also \cite{combes} for a different analytic approach). Our proof, on the other hand, is probabilistic and in the spirit of \cite{simon}, which treats the Euclidean Laplace operator. In fact, the machinery developed in this paper entails that it is actually sufficient to know the leading order term of the usual $t\to 0+$ asymptotic expansion of the \emph{unperturbed} heat kernel diagonal $\mathrm{e}^{-tH_p}(x,x)$, in the sense that then one can use path integrals as a perturbative tool to reduce everything to the latter expansion. One moral of this story is that, if one is only interested in its semiclassical limit for the trace, asymptotic expansions for the trace of the perturbed operator are an overkill. In addition, as indicated above, these expansions naturally require many strong local and global assumptions on the underlying data.\vspace{1mm}

We point out that the proof of Theorem \ref{main} becomes significantly more complicated than the one from \cite{simon}. The main reason for this is that the on-diagonal values of the Euclidean heat kernel $\mathrm{e}^{-t H_g}(x,x)=(2\pi t)^{m/2}$ do not depend on $x$ at all, and in addition one has many translation-invariance arguments that are based on Wiener measures. To compensate this lack of symmetries (which of course aren\rq{}t even present on arbitrary Riemannian manifold) in the general case, we were lead to the new Definition \ref{def1}, and we found that a principle of not feeling the boundary plays an essential role in the context of semiclassical limits of abstract Schrödinger semigroups. This seems to be a new observation. Another difficulty of our generalized setting is that the Hunt process associated with the regular Dirichlet form that stems from $p(t,x,y)$ has in general c\`{a}dl\`{a}g (and possibly explosive) paths, which makes some arguments more subtle. This is of course the price for working with possibly nonlocal Dirichlet forms.\vspace{3mm}

{\bf Acknowledgements:} I would like to thank Bernard Helffer for many very helpful remarks on the literature. In addition, I would like to thank Klaus Mohnke, Ralf Rueckriemen and Eren Ucar for helpful discussions. This research has been financially supported by the project SFB 647: Raum - Zeit - Materie of the German research foundation (DFG).

\section{Main results}\label{Main}

In the sequel, all function spaces are understood to be complex-valued. Let $X$ be a seperable metrizable locally compact space and let $\mu$ be a Radon measure on $X$ with a full support. We fix once for all a strictly positive pointwise consistent $\mu$-heat kernel
$$
p:(0,\infty)\times X\times X \longrightarrow (0,\infty),\quad (t,x,y)\longmapsto p(t,x,y).
$$ 
By definition, $p$ is a jointly Borel measurable function such that for all $t>0$, $x,y\in X$ one has
\begin{align}\label{A1}
&\>\> p(t+s,x,y)=\int  p(t,x,z)p(s,z,y) \Id\mu ( z),\\\label{A2}
&\>\> p(t,x,y)=p(t,y,x)  ,\\\label{A3}
&\>\>\int  p(t,x,z)\Id\mu( z)\leq 1,
\end{align}
such that with
$$
P_tf(x):=\int p(t,x,y) f(y)\Id\mu(z)
$$
it holds that
$$
\int\left|P_t f-f\right|^2\Id\mu\to 0\>\text{ as $t\to 0+$, for all $f\in\IL^2(X,\mu)$.}
$$
It follows that $(P_t)_{t>0}$ is a strongly continuous self-adjoint contraction semigroup in $\IL^2(X,\mu)$, so let $H_p\geq 0$ denote its self-adjoint generator, and $Q_p\geq 0$ the densely defined symmetric sesquilinear form associated with $H_p$.

\begin{Definition}\label{dpm} $p$ is called \emph{regular}, if $C_{\c }(X)\cap\dom(Q_p)$ is dense in $C_{\c}(X)$ with respect to the uniform norm, and dense in $\dom(Q_p)$ with respect to the norm $Q_p(f,f)+\int |f|^2\Id \mu$.
\end{Definition}

If $p$ is regular, then $Q_p$ becomes a regular Dirichlet form \cite{fuku} in $\IL^2(X,\mu)$, a fact that should justify this notion.\vspace{2mm}

Given $t>0$ let $\Omega(X,t)$ denote the space of c\`{a}dl\`{a}g paths $\gamma: [0,t]\to X$. For every $0\leq s\leq t$ let $\IFF_s(X,t)$ denote the sigma-algebra on $\Omega(X,t)$ which is generated by the coordinate map on $\Omega(X,t)$ including the time $s$, and let the sub-$\sigma$-algebra $\IFF^-_s(X,t)\subset \IFF_s(X,t)$ be defined by
$$
\IFF^-_s(X,t):=\text{smallest-$\sigma$-algebra on $\Omega(X,t)$ which contains $\bigcup_{0\leq u<s} \IFF_u(X,t)$}  .
$$
So $\IFF^-_s(X,t)$, $0\leq s\leq t$, is the canonically given left-continuous filtration of $\IFF_t(X,t)$. If $p$ is regular, then by a classical result of Fukushima for every regular Dirichlet form there exists a possibly explosive Hunt process \cite{fuku} associated with its semigroup (which is $(P_t)_{t>0}$ in our case). This immediately gives the first part of the following result, where $\IP^{x_0}_t$ is just the restriction of the law of this process with initial point $x_0$ to paths that do not explode until $t$:

\begin{Propandef}\label{wiener} Assume that $p$ is regular.\\
\emph{a)} For every $x_0\in X$, \emph{the Wiener measure $\IP^{x_0}_t$ from $x_0$ with terminal time $t$ w.r.t. $p$} is defined to be the unique sub-probability measure on $ \IFF_t(X,t) $ which satisfies
\begin{align}\label{pamf}
&\IP^{x_0}_t\{\gamma:\gamma(t_1)\in A_1,\dots,\gamma(t_n)\in A_n\} \\\nn
&=\int_{A_1}\cdots\int_{A_n} p(\delta_0 ,x_0,x_1) \cdots p(\delta_{n-1} ,x_{n-1},x_n) \Id \mu(x_1)\cdots \Id\mu(x_n)
\end{align}
for all $n\in\IN_{\geq 1}$, all partitions $0=t_0<t_1< \dots<t_{n-1}<t_n=t$ and all Borel sets $A_1,\dots,A_n\subset X$, where $\delta_j:=t_{j}-t_{j-1}$.\\
\emph{b)} For every $x_0,y_0\in X$, the \emph{pinned Wiener measure $\IP^{x_0,y_0}_t$ from $x_0$ to $y_0$ with terminal time $t$ w.r.t. $p$} is defined to be the unique probability measure on $ \IFF^-_t(X,t) $ which satisfies
\begin{align}\label{aposs}
 \IP^{x_0,y_0}_t(A)=\f{1}{p(t,x_0,y_0)}\int_A p(t-s,\gamma(s),y_0)\Id \IP^{x_0}_t(\gamma)
\end{align}
for all $0\leq s<t$ and all $A\in \IFF_s(X,t)$.
\end{Propandef}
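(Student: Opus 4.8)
Part \emph{a)} is essentially bookkeeping around Fukushima's theorem. Since $p$ is regular, $Q_p$ is a regular Dirichlet form, so there is a $\mu$-symmetric, possibly explosive Hunt process $\big((\mathbb{X}_s)_{s\ge 0},(\mathbf{P}^x)_x\big)$ whose transition semigroup is $(P_t)_{t>0}$; as $p(t,x,\cdot)$ is a $\mu$-density of $P_t(x,\cdot)$, this process has transition density $p$. I would then define $\IP^{x_0}_t$ as the push-forward of $\mathbf{P}^{x_0}\big(\,\cdot\cap\{\zeta>t\}\big)$ under $\gamma\mapsto\gamma|_{[0,t]}$ --- this lands in $\Omega(X,t)$ because Hunt paths are c\`{a}dl\`{a}g and remain in $X$ on $\{\zeta>t\}$ --- and compute its finite-dimensional distributions from the Markov property and the density, obtaining exactly \eqref{pamf}; in particular the total mass is $\int p(t,x_0,z)\,\Id\mu(z)\le 1$. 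Uniqueness is a $\pi$--$\lambda$ argument: the cylinder sets in \eqref{pamf} are stable under intersection (refine the partition, intersect the Borel sets), they contain $\Omega(X,t)$, and they generate $\IFF_t(X,t)$, so two finite measures coinciding on them coincide on $\IFF_t(X,t)$.

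For part \emph{b)} the conceptual picture is that $\IP^{x_0,y_0}_t$ is the law of the associated bridge, i.e. the time-dependent Doob transform of the process killed at time $t$ by the space--time harmonic function $(s,x)\mapsto p(t-s,x,y_0)$ --- which is exactly why one can only expect to pin the measure down on $\IFF^-_t(X,t)$, the weight becoming singular as $s\uparrow t$. I would carry this out in three steps. \textbf{(i) Consistency.} From \eqref{pamf} and the Chapman--Kolmogorov identity \eqref{A1} one records the Markov property of $\IP^{x_0}_t$ along the left-continuous filtration, and deduces that for $0\le s\le s'<t$ and $A\in\IFF_s(X,t)\subset\IFF_{s'}(X,t)$,
\[
\int_A p(t-s',\gamma(s'),y_0)\,\Id\IP^{x_0}_t(\gamma)=\int_A p(t-s,\gamma(s),y_0)\,\Id\IP^{x_0}_t(\gamma);
\]
it suffices to check this on cylinder sets based on times in $[0,s]$ (one may insert the time $s$ with Borel set $X$), where it collapses to $\int p(s'-s,x,z)\,p(t-s',z,y_0)\,\Id\mu(z)=p(t-s,x,y_0)$. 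Hence $A\mapsto p(t,x_0,y_0)^{-1}\int_A p(t-s,\gamma(s),y_0)\,\Id\IP^{x_0}_t(\gamma)$ is a well-defined, $[0,1]$-valued, finitely additive set function on the algebra $\A_{t^-}:=\bigcup_{0\le s<t}\IFF_s(X,t)$, which generates $\IFF^-_t(X,t)$. \textbf{(ii) Extension.} For each fixed $s<t$ this set function restricts to the genuine finite measure $p(t,x_0,y_0)^{-1}p(t-s,\gamma(s),y_0)\,\Id\IP^{x_0}_t$ on $\IFF_s(X,t)$, and by (i) these restrictions form a consistent family as $s\uparrow t$; since $X$ is separable metrizable (so the path spaces involved are standard Borel), a Kolmogorov/Carath\'{e}odory extension argument yields a measure on $\sigma(\A_{t^-})=\IFF^-_t(X,t)$. \textbf{(iii) Normalization and uniqueness.} Finally, taking $A=\Omega(X,t)\in\IFF_0(X,t)$ and using $\gamma(0)=x_0$, resp. letting $s\uparrow t$ and using \eqref{A1} once more, gives total mass $1$; and uniqueness is again $\pi$--$\lambda$, since $\A_{t^-}$ is stable under intersection and generates $\IFF^-_t(X,t)$.

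The algebraic identity in step (i) is just Chapman--Kolmogorov and should be routine. I expect the real obstacle to be step (ii): performing the extension cleanly in the c\`{a}dl\`{a}g and possibly explosive path-space setting, i.e. a Kolmogorov-type extension over the continuum of times $[0,t)$, where a decreasing sequence of events in $\A_{t^-}$ can ``escape to time $t$'' and must be controlled by a tightness/compactness argument. This is precisely the kind of technical subtlety --- absent in Simon's translation-invariant Brownian setting --- that the introduction warns about.
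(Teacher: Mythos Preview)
Your treatment of part a) is correct and coincides with the paper's: invoke Fukushima's existence theorem for the Hunt process associated with the regular Dirichlet form, push forward $\mathbf{P}^{x_0}(\,\cdot\cap\{\zeta>t\})$ to $\Omega(X,t)$, read off (\ref{pamf}), and obtain uniqueness by a $\pi$--$\lambda$ argument.

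For part b) the paper gives no argument of its own --- it simply defers to the Markovian bridge construction of Fitzsimmons--Pitman--Yor \cite{palm}. Your plan to build the bridge directly via consistency and extension is the right picture, but step (i) has a genuine gap in the possibly non-conservative setting that the paper explicitly allows. Because $\IP^{x_0}_t$ is the law \emph{restricted to $\{\zeta>t\}$}, the marginal of $\gamma(s')$ for $s'<t$ carries a survival factor $\int p(t-s',\cdot,w)\,\Id\mu(w)$; concretely, for $A=\{\gamma(s)\in B\}$ your claimed identity would force
\[
\int p(s'-s,x,z)\,p(t-s',z,y_0)\Big(\int p(t-s',z,w)\,\Id\mu(w)\Big)\Id\mu(z)
= p(t-s,x,y_0)\int p(t-s,x,w)\,\Id\mu(w),
\]
which Chapman--Kolmogorov alone does not yield unless $\int p(r,\cdot,w)\,\Id\mu(w)\equiv 1$. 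In particular your normalization in step (iii), taking $s=0$, produces total mass $\IP^{x_0}_t(\Omega(X,t))$, which is strictly less than $1$ whenever the process is explosive --- so the set function you wrote down is neither consistent nor a probability. The repair is to run the Doob $h$-transform against the \emph{unrestricted} law of the Hunt process on paths into $X\cup\{\partial\}$: the weight $p(t-s,\cdot,y_0)$ vanishes at the cemetery, so no survival factor intrudes, Chapman--Kolmogorov gives the consistency cleanly, and the resulting bridge is automatically supported on $\Omega(X,t)$. This is precisely what \cite{palm} supplies, and is why the paper simply cites it. Compared with this, the extension difficulty you flag in step (ii) is secondary.
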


Note that $\IP^{x_0}_t$ is in general only a sub-probability measure, as we have made no conservativeness assumption. The second part of Proposition \ref{wiener} follows from the results of \cite{palm}. Note the subtlety that the pinned Wiener measure is only defined $\IFF_t^-(X,t)$, which is in general not equal to $\IFF_t(X,t)$, as the paths are not left-continuous.



Now we can give the following definition:

\begin{Definition}\label{dpoa} If $p$ is regular, then it is said to satisfy \emph{the principle of not feeling the boundary}, if for all compact subsets $K\subset X$ having a nonempty interior $\mathring{K}$ and all $x\in \mathring{K}$, the pinned Wiener measures satisfy
$$
\lim_{t\to 0+}\mathbb{P}^{x,x}_t    \{\gamma: \ \gamma(s)\in K\text{ for all $s\in [0,t)$}\}=1.
$$
\end{Definition}

Note that in the situation of Definition \ref{dpoa} one indeed has
$$
 \{\gamma: \ \gamma(s)\in K\text{ for all $s\in [0,t)$}\}\in \IFF^-_t(X,t),
$$
as can be seen from
$$
\Omega(X,t)\setminus \{\gamma: \ \gamma(s)\in K\text{ for all $s\in [0,t)$}\}= \bigcup_{s\in [0,t)\cap \mathbb{Q}}\{\gamma: \gamma(s)\in X\setminus K\}.
$$ 
It is shown below that the principle of not feeling the boundary holds on arbitrary complete Riemannian manifolds and weighted graphs. We believe that an abstract investigation of this property should be of an independent interest. \vspace{2mm}

Let us now take a look at the perturbations of $H_p$ by potentials $w:X\to\IR$ that we will consider in the sequel. The next (well-known) definition is motivated by the fact that we want to investigate the behaviour of operators of the form $\exp(-( t H_p+w))$, where $t >0$ is small. In order to use the Feynman-Kac formula, which is valid for expressions of the form $\exp(-t ( H_p+w\rq{}))$, we have to factor 
$$
\exp(-( t H_p+w))=\exp(-t ( H_p+w/t ))
$$
and this shows that we have to guarantee that $H_p+w/t $ is semibounded from below for all small $t\rq{}s$. This clearly requires some control on the negative part of $w$, which is the simple idea behind the following definition, that will implement the above in a general quadratic form sense:

\begin{Definition} We say that a Borel function $w:X\to \IR$ is in the \emph{infinitesimal $Q_p$-class} $\mal_p(X,\mu)$, if for all $\epsilon>0$ there exists an $C_{\epsilon}\geq 0$ such that for all $f\in\dom(Q_p)$ one has 
$$
\int |w| |f|^2 \Id\mu \leq \epsilon Q_p(f,f)+ C_{\epsilon}\int |f|^2 \mu<\infty. 
$$
\end{Definition}

Clearly, this property depends only on the $\mu$-equivalence class of $w$. It is also important to note that bounded functions are in $\mal_p(X,\mu)$. In case $p$ is regular, the results from \cite{peter} also show that $\mathcal{K}_{p} (X,\mu)\subset \mal_p(X,\mu)$, where $\mathcal{K}_p(X,\mu)$ stands for the Kato class with respect to $p$ and $\mu$. Recall here that a Borel function $w:X\to\IR$ is by definition in $\mathcal{K}_{p} (X,\mu)$, if and only if
$$
\lim_{t\to 0 +}\int_{  X}\int^t_0p(s,x,y) |w(y)|\Id\mu(y)\Id s=0.
$$
Finally, (possibly weighted) $L^q$-conditions on $w$ that imply $w\in \mathcal{K}_{p} (X,\mu) $ can be found in \cite{kt,gun}.\vspace{1mm}

Given a function $w:X\to\IR$ let $w^{\pm}=\max(\pm w,0)$ denote its positive and negative part respectively, so that $w=w^+-w^-$.

\begin{Propandef} Let $p$ be regular and assume we are given a potential $w:X\to \IR$ with $1_Kw\in L^1(X,\mu)$ for all compact $K\subset X$ and $w^-\in \mal_p(X,\mu)$. Then the symmetric sesqui-linear form
\begin{align}\label{apa}
 Q_{p}(w)(f_1,f_2):=Q_p( f_1, f_2)+\int  \overline{w}f_1 f_2 \Id\mu 
\end{align}
with domain of definition given by all $f\in\dom(Q_p)$ such that $\int  |w||f|^2\Id\mu<\infty$, is densely defined, semibounded (from below) and closed in $\IL^2(X,\mu)$. The semibounded self-adjoint operator in $\IL^2(X,\mu)$ associated with the form $Q_{p}(w)$ will be denoted with $H_p(w)$. 
\end{Propandef}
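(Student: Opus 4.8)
The plan is to exhibit $Q_p(w)$ as an infinitesimally form‑bounded perturbation of a closed nonnegative form and then invoke the classical KLMN theorem. Write $w=w^+-w^-$ and introduce the auxiliary forms $\mathfrak{q}_+(f):=\int w^+|f|^2\,\Id\mu$ and $\mathfrak{q}_-(f):=\int w^-|f|^2\,\Id\mu$. Since $w^+\geq 0$ is a finite Borel function, multiplication by $w^+$ is a nonnegative self‑adjoint operator in $\IL^2(X,\mu)$, so $\mathfrak{q}_+$ is a closed nonnegative form with $\dom(\mathfrak{q}_+)=\{f\in\IL^2(X,\mu):\int w^+|f|^2\,\Id\mu<\infty\}$. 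As $Q_p$ is closed and nonnegative, the sum $\mathfrak{q}_0:=Q_p+\mathfrak{q}_+$, taken on $\dom(\mathfrak{q}_0):=\dom(Q_p)\cap\dom(\mathfrak{q}_+)$, is again closed and nonnegative, because the sum of two closed nonnegative forms on the intersection of their domains is closed.

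Next I would check density of the relevant domain. Since the form domain $\dom(Q_p)$ is dense in $\IL^2(X,\mu)$ and, by regularity of $p$, $C_{\c}(X)\cap\dom(Q_p)$ is dense in $\dom(Q_p)$ for the form norm, the set $C_{\c}(X)\cap\dom(Q_p)$ is dense in $\IL^2(X,\mu)$; and if $f\in C_{\c}(X)$ has support in the compact set $K$, then $\int w^+|f|^2\,\Id\mu\leq\|f\|_\infty^2\int_K|w|\,\Id\mu<\infty$ by the hypothesis $1_Kw\in L^1(X,\mu)$. Hence $C_{\c}(X)\cap\dom(Q_p)\subset\dom(\mathfrak{q}_0)$, so $\mathfrak{q}_0$ (and thus $Q_p(w)$) is densely defined; and by Cauchy--Schwarz $\int|w||f_1||f_2|\,\Id\mu\leq(\int|w||f_1|^2\,\Id\mu)^{1/2}(\int|w||f_2|^2\,\Id\mu)^{1/2}$, so the form (\ref{apa}) is well‑defined on the stated domain.

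The hypothesis $w^-\in\mal_p(X,\mu)$ now enters: for every $\epsilon>0$ there is $C_\epsilon\geq 0$ with $\mathfrak{q}_-(f)\leq\epsilon Q_p(f,f)+C_\epsilon\int|f|^2\,\Id\mu\leq\epsilon\,\mathfrak{q}_0(f)+C_\epsilon\int|f|^2\,\Id\mu$ for all $f\in\dom(Q_p)$. Thus $\mathfrak{q}_-$ has $\mathfrak{q}_0$‑form bound $0$, and this estimate also forces $\dom(\mathfrak{q}_0)$ to coincide with $\{f\in\dom(Q_p):\int|w||f|^2\,\Id\mu<\infty\}$, i.e.\ with the domain assigned to $Q_p(w)$ in the statement (if $f\in\dom(Q_p)$ with $\int w^+|f|^2\,\Id\mu<\infty$, the displayed bound makes $\int w^-|f|^2\,\Id\mu$, hence $\int|w||f|^2\,\Id\mu$, finite; the converse inclusion is trivial). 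Therefore $Q_p(w)=\mathfrak{q}_0-\mathfrak{q}_-$ on this common domain, and $Q_p(w)$ is symmetric since $w$ is real‑valued. Applying the KLMN theorem with a fixed $\epsilon\in(0,1)$ then yields that $Q_p(w)$ is semibounded from below, with lower bound $-C_\epsilon/(1-\epsilon)$, and closed; the representation theorem for closed semibounded symmetric forms produces the unique associated semibounded self‑adjoint operator $H_p(w)$.

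I expect the only genuinely delicate point to be using the two hypotheses on $w$ at precisely the right places — local $L^1$‑integrability for the density of the form domain (and well‑posedness of the perturbing term), and the infinitesimal $Q_p$‑bound on $w^-$ for the relative form bound required by KLMN — together with the domain bookkeeping that identifies $\dom(Q_p(w))$ with $\dom(\mathfrak{q}_0)$. Everything else is the standard form‑perturbation calculus for semibounded self‑adjoint operators.
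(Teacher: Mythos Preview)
Your proof is correct and follows essentially the same approach as the paper: decompose $w=w^+-w^-$, use that the form given by $w^+$ is closed (the paper cites Fatou's lemma, you invoke the self-adjointness of multiplication by $w^+$; both are fine), apply KLMN for the infinitesimally $Q_p$-bounded negative part, and verify density of the domain via regularity together with the local $L^1$-assumption on $w$. The only cosmetic difference is the order: the paper first applies KLMN to $Q_p-\mathfrak{q}_-$ and then adds the closed nonnegative form $\mathfrak{q}_+$, whereas you first form $\mathfrak{q}_0=Q_p+\mathfrak{q}_+$ and then subtract $\mathfrak{q}_-$ via KLMN; your ordering is slightly cleaner since $\mathfrak{q}_0\geq 0$ throughout, and you also spell out the domain identification $\dom(\mathfrak{q}_0)=\{f\in\dom(Q_p):\int|w||f|^2\,\Id\mu<\infty\}$, which the paper leaves implicit.
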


\begin{proof} The claim for the case $w_+=0$ follows immediately from the KLMN theorem. For the general case, one just has to note that the maximally defined form given by $w_+$ is densely defined and closed (using Fatou\rq{}s lemma). Thus (\ref{apa}) is just the sum of closed symmetric forms and thus has these properties, too. In addition, the form is densely defined, as it clearly contains $\dom(Q_p)\cap C_{\c}(X)$, and the latter set is dense in $\IL^2(X,\mu)$, by the regularity of $p$. 
\end{proof}

In order to formulate our main result, we add:

\begin{Definition}\label{def1}  A pair $(\varrho_1,\varrho_2)$ of Borel functions
$$
\varrho_1: (0,1)\longrightarrow (0,\infty),\quad \varrho_2:X\longrightarrow [0,\infty)
$$
is called an \emph{asymptotic control pair for $p$}, if the following assumptions are satisfied:
\begin{itemize}
\item the limit $\lim_{t\to 0+}p(t,x,x)\varrho_1(t)$ exists for all $x\in X$.

\item there exists a Borel function $\phi:(0,1)\to (0,\infty)$ such that 
\begin{align*}
&p(t, x,x)\lesssim \varrho_2(x)\phi(t)\>\>\text{ for all $(t,x)\in (0,1)\times X$,}\quad \sup_{0<t<1}\phi(t)\varrho_1(t)<\infty.
\end{align*}
\end{itemize}
\end{Definition}

There is no reason to expect that every strictly positive pointwise consistent heat kernel admits an asymptotic control pair. However, in the setting of Riemannian manifolds and weighted infinite graphs one has several canonical choices, also without any further assumptions on the geometry:

\begin{Example}\label{dpoaaa} Assume that $(X,g)$ is a smooth geodesically complete connected Riemannian $m$-manifold, $\mu_g$ the associated Riemannian volume measure, and $-\Delta_g=\Id^g\Id$ the Laplace-Beltrami operator. Then for every fixed $y\in X$ the minimal nonnegative solution $(t,x)\mapsto p_g(t,x,y)$ of 
$$
\partial_tu(t,x)= \Delta_g u(t,x),\quad \lim_{t\to 0+} u(t,\bullet)=\delta_y\quad\text{ as distributions}
$$
in $(0,\infty)\times X$ induces a strictly positive pointwise consistent $\mu_g$-heat kernel $p_g$. Now by locality the Wiener measures are concentrated on continuous paths, and the principle of not feeling the boundary follows easily from Theorem 1.2 in \cite{Hsu2}. In this situation, the operator $H_g:=H_{p_g}$ is the unique self-adjoint extension of $-\Delta_g$ (this uniqueness requires geodesically completeness), and we set 
$$
\mal(X,g):=\mal_{p_g}(X,\mu_g),\quad  \mathcal{K}(X,g):=\mathcal{K}_{p_g}(X,\mu_g).
$$
In case $w: X\to \IR$ is a continuous potential  with $w^-\in \mathcal{K}(X,g)$ then the definition of $H_p(w)$ becomes very easy, as then $-\Delta+w$ is essentially self-adjoint \cite{post}. \\
In leading order, the asymptotic expansion of $p_g(t,x,x)$ as $t\to 0+$ (cf. \cite{Hsu2} for a detailed proof in the noncompact geodesically complete case) implies
$$
\lim_{t\to 0+}p_g(t,x,x)\varrho^{(m)}(t)=1,
$$
where $\varrho^{(m)}(t):=(2\pi t)^{m/2}$. Given $x\in X$, and $b>1$, let $r_{\mathrm{Eucl},g}(x,b)$ be the supremum of all $r>0$ such that the open ball $B_g(x,r)$ with respect to the geodesic distance admits a coordinate system 
$$
\vec{x}=(x^1,\dots,x^m):B_g(x,r)\longrightarrow U\subset \IR^m
$$
with $\vec{x}(x)=0$ and with respect to which one has one has the following inequality for all $y\in B_g(x,r)$, 
\begin{align}\label{equi}
&(1/b)(\delta_{ij})\leq (g_{ij}(y)):= \left(g(\partial_i,\partial_j)(y)\right)\leq b (\delta_{ij})\>\text{ as symmetric bilinear forms.}
\end{align}
Then $r_{\mathrm{Eucl},g}(x,b)$ is called \emph{the Euclidean radius of $X$ at $x$ with accuracy $b$}. It is shown in \cite{gun} that for all $b>1$ the function
\begin{align}\label{apn}
\varrho_g(x):= 1/\min (r_{\mathrm{Eucl},g}(x,2), 1)^{m} , \quad x\in X,
\end{align}
turns $(\varrho^{(m)},\varrho_g)$ into an asymptotic control function for $p_g$, without any curvature assumptions on $(X,g)$. This result relies on a parabolic $\IL^1$-mean value inequality for solutions of the heat equation.\\
In case the Ricci curvature $\mathrm{Ric}_g$ is bounded from below by a constant, then the function
$$
\varrho_g'(x):=  1/\mu_g(B_g(x,1)), \quad x\in X,
$$
turns $(\varrho^{(m)},\varrho_g')$ into an asymptotic control function for $p_g$ (cf. Example 2.7 in \cite{gun}). This follows from the Li-Yau heat kernel estimate and volume doubling.
\end{Example}
 
As for graphs:

\begin{Example}\label{graph} Recall that a weighted graph is a triple $(X,b,\mu)$, with $X$ is a countable set, $b$ a symmetric function
$$
b: X\times X\longrightarrow  [0,\infty) \text{ with $b(x,x)=0$, $\sum_{y\in X} b(x,y)<\infty$ for all $x\in X$,}
$$
and $\mu:X\to (0,\infty)$ is an arbitrary function. One equipps $X$ with its discrete topology (which is thus induced by the discrete metric) and writes $x\sm  y$ if $b(x,y)>0$, referring to such points as the edges of graph. In this sense, the points of X become the vertices and $\mu$ a vertex weight function. The function $\mu$ defines a Radon measure having a full support  in the obvious way:
$$
\mu(A)=\sum_{x\in A}\mu(x), \>\>A\subset X.
$$ 
In particular, the scalar product on $\IL^2(X,\mu)$ is given by $\sum_{x\in X}\overline{f_1(x)}f_2(x)\mu(x)$. \\
We assume also that $(X,b)$ is connected in the graph theoretic sense, meaning that for any $x,y\in X$ there is a finite sequence $x_1,\dots,x_n\in X$ such that $x_0=x$, $x_n=y$. \\
Denoting the space of complex-valued functions on $X$ with $C(X)$, where an index \lq{}$\mathrm{c}$\rq{} now simply means \lq{}finitely supported\rq{}, and a set of functions $F_{b}(X)$ given by all $\psi\in C(X)$ such that $\sum_{y\in X}b(x,y)|\psi(y)|<\infty$  for all $x\in X$, we define a Laplace type formal difference operator $\Delta_{b,\mu}$ by
\begin{align}\label{formal}
\Delta_{b,\mu}:F_{b}(X)\longrightarrow C(X),\>\Delta_{b,\mu}\psi(x)=-\f{1}{\mu(x)}\sum_{\{y:y\sm x\}}b(x,y)\big(\psi(x)-\psi(y)\big).
\end{align}

Then for all fixed $y\in X$, there exists \cite{kl} a pointwise minimal element $p_{b,\mu}(\cdot,\cdot,y)$ of the set given by all bounded functions 
$$
u:(0,\infty)\times X\to [0,\infty)
$$
that satisfy 
$$
\partial_t u(t,x)=\Delta_{b,\mu} u(t,x),\>\> \lim_{t\to 0+}u(t,x)=\delta_y(x)\quad\text{for all $(t,x)\in (0,\infty)\times X$.}
$$
The function $(t,x,y)\mapsto p_{b,\mu}(t,x,y)$ is strictly positive (this property requires the above notion of connectedness) and defines a strictly positive pointwise consistent and regular $\mu_{\mu}$-heat kernel \cite{kl}. In addition, one has the trivial estimates
$$
p_{b,\mu}(t,x,y)\leq 1/\mu(x)\quad\text{ for all $(t,x,y)\in (0,\infty)\times X\times X$},
$$
and the convergence
$$
\lim_{t\to 0+}p_{b,\mu}(t,x,x)\mu(x)=1 \quad\text{ for all $x\in X$.}
$$
The first property follows from $\sum_x p(t,x,y)\mu(y)\leq 1$, and the second one from the strong continuity of $P_t$ (noting by discreteness, $\IL^2$-convergence implies pointwise convergence). To see that $p_{b,\mu}$ satisfies the principle of not feeling the boundary, note that in this case $\mathbb{P}^{x,x}_t$ is concentrated on pure jump paths, and one has
\begin{align*}
&\mathbb{P}^{x,x}_t\{\gamma: \gamma(s)\in\{x,x_1,\dots, x_l\}\text{ for all $s\in [0,t)$}\}\geq \>\mathbb{P}^{x,x}_t\{\gamma: \text{$\gamma$ has not jumped before $t$}\}\\
&\geq  \exp\Big(-\f{t}{\mu(x)}\sum_y b(x,y) \Big) p(t,x,x)^{-1}\mu(x)^{-1},
\end{align*}
where the first estimate is trivial and the second one is well-known (cf. page 10 in \cite{G45} and the references therein). Finally, in this case the operator $H_{b,\mu}:= H_{p_{b,\mu}}$ is a restriction of $\Delta_{b,\mu}$ \cite{kl, GKS-13,G45}, and we set $\mal(X,b,\mu):=\mal_{p_{b,\mu}}(X,\mu)$.

%
%
\end{Example}

Here is our main result:

\begin{Theorem}\label{main} Assume that there exists a metric on $X$ which induces the original topology in a way that for every $x\in X$ there is an $r>0$ such that the open metric ball $B(x,r )$ is relatively compact. Assume also that $p$ satisfies the principle of not feeling the boundary. Then for every asymptotic control pair $(\varrho_1,\varrho_2)$ for $p$, and every continuous potential $w:X\to \IR$ with $w^-\in \mal_p(X,\mu)$ and $\int  \mathrm{e}^{-w} \varrho_2\Id\mu<\infty$, one has
\begin{align}\label{aappp}
\lim_{t \to 0+} \varrho_1(t) \mathrm{tr}\big(\mathrm{e}^{-t  H_p(w/t)}\big)=  \int  \mathrm{e}^{-w(x)}\lim_{t \to 0+}p(t,x,x)\varrho_1(t)\Id\mu(x)<\infty.
\end{align}
\end{Theorem}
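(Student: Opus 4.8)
The plan is to represent the trace by a Feynman--Kac path integral and to pass to the limit $t\to 0+$ inside the integral, the principle of not feeling the boundary furnishing the pointwise limit and the asymptotic control pair furnishing an integrable majorant; this is in the spirit of \cite{simon}, but the loss of translation invariance forces the localization device of Step 2 below.

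\emph{Step 1: Feynman--Kac representation.} First I would note that $w^-\in\mal_p(X,\mu)$ makes $\epsilon^{-1}w^-$ infinitesimally $Q_p$-bounded for every $\epsilon>0$, so $tH_p(w/t)=tH_p+w$ is self-adjoint and bounded from below for all small $t>0$ and the Feynman--Kac formula applies: the integral kernel of $\mathrm{e}^{-tH_p(w/t)}$ is the positive, jointly measurable function $k_t(x,y)=p(t,x,y)\int_{\Omega(X,t)}\mathrm{e}^{-\frac1t\int_0^t w(\gamma(s))\,\Id s}\,\Id\mathbb{P}^{x,y}_t(\gamma)$. Using the trace formula for positive Feynman--Kac kernel operators this gives
\[
\varrho_1(t)\,\mathrm{tr}\big(\mathrm{e}^{-tH_p(w/t)}\big)=\int_X \varrho_1(t)\,p(t,x,x)\,u_t(x)\,\Id\mu(x),\quad\text{where } u_t(x):=\int_{\Omega(X,t)}\mathrm{e}^{-\frac1t\int_0^t w(\gamma(s))\,\Id s}\,\Id\mathbb{P}^{x,x}_t(\gamma).
\]
Writing $\psi(x):=\lim_{t\to0+}p(t,x,x)\varrho_1(t)$ and $c_0:=\sup_{0<t<1}\phi(t)\varrho_1(t)<\infty$, the bounds in Definition \ref{def1} give $p(t,x,x)\varrho_1(t)\lesssim \varrho_2(x)$ for $0<t<1$, hence $\psi\lesssim \varrho_2$ and $\int_X\mathrm{e}^{-w}\psi\,\Id\mu<\infty$; this is the finiteness asserted in (\ref{aappp}), and it identifies the right-hand side of (\ref{aappp}) with $\int_X\mathrm{e}^{-w(x)}\psi(x)\,\Id\mu(x)$.

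\emph{Step 2: localization, and the lower bound.} For each $x\in X$ I would pick $r(x)>0$ such that $K_x:=\overline{B(x,r(x))}$ is compact, $x$ lies in the interior of $K_x$, and $\inf_{K_x}w\ge w(x)-1$; then set $I_t(x):=\{\gamma:\gamma(s)\in K_x\ \text{for all }s\in[0,t)\}\in\IFF^-_t(X,t)$ and split $u_t=A_t+B_t$ with $A_t(x):=\int_{I_t(x)}\mathrm{e}^{-\frac1t\int_0^t w}\,\Id\mathbb{P}^{x,x}_t$ and $B_t$ the integral over the complement. On $I_t(x)$ the path stays in $K_x$, so $\tfrac1t\int_0^t w(\gamma(s))\,\Id s\ge w(x)-1$ and hence $A_t(x)\le\mathrm{e}^{1-w(x)}$; moreover, after refining $K_x$ to a compact ball on which the oscillation of $w$ is $<\epsilon$ and invoking the principle of not feeling the boundary together with the continuity of $w$, one gets $A_t(x)\to\mathrm{e}^{-w(x)}$ and $\mathbb{P}^{x,x}_t(\Omega(X,t)\setminus I_t(x))\to0$ as $t\to0+$. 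The lower bound in (\ref{aappp}) then follows from Fatou's lemma applied to Step 1, using $u_t\ge A_t$, $\liminf_{t\to0+}A_t(x)\ge\mathrm{e}^{-w(x)}$ and $\varrho_1(t)p(t,x,x)\to\psi(x)$. For the $A_t$-part of the upper bound, the bound $\varrho_1(t)p(t,x,x)A_t(x)\lesssim \varrho_2(x)\mathrm{e}^{1-w(x)}$ together with $\varrho_2\mathrm{e}^{-w}\in\IL^1(X,\mu)$ lets dominated convergence give $\int_X\varrho_1(t)p(t,x,x)A_t(x)\,\Id\mu\to\int_X\psi\,\mathrm{e}^{-w}\,\Id\mu$.

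\emph{Step 3: the exit term, and the main obstacle.} It remains to show $\int_X\varrho_1(t)p(t,x,x)B_t(x)\,\Id\mu(x)\to0$, and I expect this to be the heart of the proof. When $w^-$ is bounded one has $\mathrm{e}^{-\frac1t\int_0^tw}\le\mathrm{e}^{\|w^-\|_\infty}$ off $I_t(x)$, so $B_t(x)\le\mathrm{e}^{\|w^-\|_\infty}\mathbb{P}^{x,x}_t(\Omega(X,t)\setminus I_t(x))\to0$ pointwise; and in the presence of an $\IL^1$-majorant for $\varrho_2$ (for instance when $w^+$ is also bounded, where $\varrho_2\mathrm{e}^{-w}\in\IL^1$ forces $\varrho_2\in\IL^1$) the claim follows by dominated convergence. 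The general case I would reach via the monotone exhaustion $\underline w_n:=w\vee(-n)$, whose negative part $w^-\wedge n$ is bounded and inherits the form bounds of $w^-$, with $\underline w_n\downarrow w$, $\int\mathrm{e}^{-\underline w_n}\varrho_2\,\Id\mu\le\int\mathrm{e}^{-w}\varrho_2\,\Id\mu$, and (monotonicity of the forms $Q_p(\underline w_n/t)$ and of traces) $\varrho_1(t)\mathrm{tr}(\mathrm{e}^{-tH_p(\underline w_n/t)})\uparrow\varrho_1(t)\mathrm{tr}(\mathrm{e}^{-tH_p(w/t)})$ and $\int\mathrm{e}^{-\underline w_n}\psi\,\Id\mu\uparrow\int\mathrm{e}^{-w}\psi\,\Id\mu$. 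The hard part is precisely that the $\IL^1$-majorant for $\varrho_2$ disappears once $w^+$ is unbounded, so the exit estimate must genuinely exploit that the paths contributing to $B_t$ leave $K_x$ --- where their $\mathbb{P}^{x,x}_t$-mass is negligible by the principle of not feeling the boundary --- balanced against a Khasminskii-type exponential-moment bound for the additive functional of $w^-\in\mal_p(X,\mu)$ along the bridge; and the limits $t\to0+$ and $n\to\infty$ must be interchanged using estimates uniform in $n$, which is legitimate since $\underline w_n^-\le w^-$. Carrying this through yields $\limsup_{t\to0+}\varrho_1(t)\mathrm{tr}(\mathrm{e}^{-tH_p(w/t)})\le\int_X\psi\,\mathrm{e}^{-w}\,\Id\mu$ (and in particular finiteness of the trace for small $t$), which together with Step 2 proves (\ref{aappp}).
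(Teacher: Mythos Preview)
Your Step~1 and the lower-bound half of Step~2 are essentially the paper's argument: the paper also uses the Feynman--Kac trace formula, restricts the bridge integral to paths staying in a small compact ball around $x$, bounds the exponent by $\max_{\overline{B(x,\delta)}}w$, and concludes via Fatou and the principle of not feeling the boundary.

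The divergence is in the upper bound, and here there is a genuine gap. The paper does \emph{not} attempt to control the exit term $B_t$ at all. Instead it obtains the upper bound in one line from the abstract Golden--Thompson inequality
\[
\mathrm{tr}\big(\mathrm{e}^{-t(H_1\dotplus H_2)}\big)\le \mathrm{tr}\big(\mathrm{e}^{-(t/2)H_1}\mathrm{e}^{-tH_2}\mathrm{e}^{-(t/2)H_1}\big),
\]
applied with $H_1=H_p$, $H_2=w/t$; combined with Chapman--Kolmogorov this gives $\mathrm{tr}(\mathrm{e}^{-tH_p(w/t)})\le\int p(t,x,x)\mathrm{e}^{-w(x)}\,\Id\mu(x)$ directly (first for $w$ bounded below, then in general via $w_n=\max(-n,w)$ and the Feynman--Kac formula). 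Multiplying by $\varrho_1(t)$ and using dominated convergence with the majorant $\varrho_2\mathrm{e}^{-w}\in L^1$ finishes the upper bound.

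Your proposed route through $B_t$ runs into two obstacles that you have not overcome. First, even when $w^-$ is bounded, the only majorant you exhibit for $\varrho_1(t)p(t,x,x)B_t(x)$ is a constant times $\varrho_2(x)$, which need not lie in $L^1(X,\mu)$; the principle of not feeling the boundary gives only pointwise decay of the exit probability, with no rate, so dominated convergence is unavailable. Second, your monotone approximation $\underline w_n=w\vee(-n)$ goes the wrong way for the upper bound: since $\underline w_n\ge w$ one has $\mathrm{tr}(\mathrm{e}^{-tH_p(\underline w_n/t)})\le\mathrm{tr}(\mathrm{e}^{-tH_p(w/t)})$, so establishing the limit along $\underline w_n$ would only reproduce the lower bound. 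The Khasminskii idea is also not applicable here, because the additive functional in question is $\tfrac1t\int_0^t w^-(\gamma(s))\,\Id s$, and neither $w^-\in\mal_p(X,\mu)$ nor even the Kato condition controls its exponential moments as $t\to0+$. In short, what you flag as ``the heart of the proof'' is an unnecessary detour: Golden--Thompson replaces all of Step~3.
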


The proof of Theorem \ref{main} will be given in Section \ref{dsaar} below. A few remarks about the formulation of Theorem \ref{main} are in order:

\begin{Remark} 1. The integrability 
$$
\int  \mathrm{e}^{-w(x)}\lim_{t \to 0+}p(t,x,x)\varrho_1(t)\Id\mu(x)<\infty
$$
is not assumed, but is a consequence of the assumptions. Clearly, this implies that for all sufficiently small $t$ one has $\mathrm{tr}\big(\mathrm{e}^{-t  H(w/t )}\big)<\infty$, showing that $H_p(w/t )$ has a discrete spectrum for small $t$'s.\\
2. The continuity of $w$ is only used to prove the lower bound  
$$
\liminf_{t \to 0+} \varrho_1(t) \mathrm{tr}\big(\mathrm{e}^{-t  H_p(w/t)}\big)\geq   \int  \mathrm{e}^{-w(x)}\lim_{t \to 0+}p(t,x,x)\varrho_1(t)\Id\mu(x),
$$
and for this to hold an inspection of the proof shows that it is enough to assume that $w$ is continuous away from a closed set $N$ with $\mu(N)=0$.\\
The upper bound 
$$
\limsup_{t \to 0+} \varrho_1(t) \mathrm{tr}\big(\mathrm{e}^{-t  H_p(w/t )}\big)\leq  \int  \mathrm{e}^{-w(x)}\lim_{t \to 0+}p(t,x,x)\varrho_1(t)\Id\mu(x)
$$
remains true under much weaker local assumptions on $w$ than continuity. For example, an assumption of the form $1_Kw\in\IL^1(X,\mu)\cap \mathcal{K}_{p} (X,\mu)$ for all compact $K\subset X$ would do.
\end{Remark}

Let us specialize our main result to the Riemannian setting (cf. Example \ref{dpoaaa} for the corresponding notation). Recall that  $\varrho^{(m)}(t):=(2\pi t)^{m/2}$.  

\begin{Corollary}\label{pmyy} Assume that $(X,g)$ is a smooth geodesically complete connected Riemannian $m$-manifold. Then for every Borel function $\varrho :X\to [0,\infty)$ which makes $  (\varrho^{(m)},\varrho )$ an asymptotic control pair\footnote{For example, one can take $\varrho:=\varrho_g$ with $\varrho_g$ as in (\ref{apn}).} for $p$, and for every continuous potential $w:X\to \IR$ with $w^-\in \mal(X,g)$ and $\int \mathrm{e}^{-w} \varrho \Id\mu_g <\infty$, one has
\begin{align*}
\lim_{t \to 0+} (2\pi t)^{m/2} \mathrm{tr}\big(\mathrm{e}^{-t  H_g(w/t)}\big)=  \int  \mathrm{e}^{-w(x) }\Id\mu_g(x)<\infty.
\end{align*}
\end{Corollary}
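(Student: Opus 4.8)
The plan is to deduce Corollary \ref{pmyy} as a direct application of Theorem \ref{main} with $p:=p_g$, so the proof reduces to checking that all hypotheses of Theorem \ref{main} hold for the Riemannian heat kernel and that the right-hand side of (\ref{aappp}) collapses to $\int\mathrm{e}^{-w}\,\Id\mu_g$. Concretely, I need to verify: (i) the existence of a metric inducing the manifold topology with relatively compact small balls; (ii) regularity of $p_g$ in the sense of Definition \ref{dpm}; (iii) the principle of not feeling the boundary for $p_g$; (iv) that $(\varrho^{(m)},\varrho)$ is an asymptotic control pair together with the explicit value $\lim_{t\to 0+}p_g(t,x,x)\varrho^{(m)}(t)=1$; and (v) that the potential $w$, and moreover $w/t$ for all small $t$, satisfies the local and global conditions of Theorem \ref{main}.

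For (i) I would take the geodesic distance $d_g$: it induces the original manifold topology, and by Hopf--Rinow (using geodesic completeness) every closed bounded set, in particular every $\overline{B_g(x,r)}$, is compact. For (ii), recall from Example \ref{dpoaaa} that on a geodesically complete connected manifold $H_g=H_{p_g}$ is the unique self-adjoint extension of $-\Delta_g$; in particular $C^{\infty}_{\c}(X)\subset\dom(Q_{p_g})$, and $C^{\infty}_{\c}(X)$ is dense in $C_{\c}(X)$ for the uniform norm and dense in $\dom(Q_{p_g})$ for the form norm $Q_{p_g}(f,f)+\int|f|^2\,\Id\mu_g$, so $p_g$ is regular. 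For (iii), this was already recorded in Example \ref{dpoaaa}: by locality the Wiener measures are concentrated on continuous paths, and the principle of not feeling the boundary follows from Hsu's Theorem 1.2 in \cite{Hsu2}.

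For (iv), the leading-order short-time expansion of the on-diagonal heat kernel on a geodesically complete manifold (see \cite{Hsu2}) gives $p_g(t,x,x)=(2\pi t)^{-m/2}(1+o(1))$ as $t\to 0+$, hence $\lim_{t\to 0+}p_g(t,x,x)\varrho^{(m)}(t)=1$ for every $x\in X$; combined with the hypothesis that $(\varrho^{(m)},\varrho)$ is an asymptotic control pair, all requirements of Theorem \ref{main} on $(\varrho_1,\varrho_2)=(\varrho^{(m)},\varrho)$ are met. For (v), continuity of $w$ gives $1_Kw\in\IL^1(X,\mu_g)$ for every compact $K$; and if $w^-\in\mal(X,g)$, then for each fixed $t\in(0,1)$ one also has $(w/t)^-=w^-/t\in\mal(X,g)$, since multiplying the defining inequality of the infinitesimal $Q_{p_g}$-class by $1/t$ and reassigning $\epsilon$ shows that this class is invariant under multiplication by positive constants. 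Thus $H_g(w/t)=H_{p_g}(w/t)$ is a well-defined semibounded self-adjoint operator for every $t\in(0,1)$ (coinciding with the unique self-adjoint realization of $-\Delta_g+w/t$ when $w^-\in\mathcal{K}(X,g)$), and $\int\mathrm{e}^{-w}\varrho\,\Id\mu_g<\infty$ is precisely the global integrability condition of Theorem \ref{main}.

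With (i)--(v) established, Theorem \ref{main} applied to $p_g$ and $(\varrho^{(m)},\varrho)$ yields
\begin{align*}
\lim_{t\to 0+}(2\pi t)^{m/2}\,\mathrm{tr}\big(\mathrm{e}^{-tH_g(w/t)}\big)=\int\mathrm{e}^{-w(x)}\lim_{t\to 0+}p_g(t,x,x)(2\pi t)^{m/2}\,\Id\mu_g(x)=\int\mathrm{e}^{-w}\,\Id\mu_g<\infty,
\end{align*}
which is the assertion. I do not expect a genuine obstacle: all of the analytic and probabilistic substance lives in Theorem \ref{main} and in the verifications carried out in Example \ref{dpoaaa}. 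The only point deserving care is the bookkeeping in step (v)---that the hypotheses on $w$ transfer to $w/t$ in a way compatible with invoking Theorem \ref{main}, and that the operator $H_g(w/t)$ on the left is indeed the form sum $H_{p_g}(w/t)$ from the earlier Proposition and Definition.
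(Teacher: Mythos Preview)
Your proposal is correct and matches the paper's approach: the paper does not give a separate proof of Corollary \ref{pmyy}, treating it as an immediate specialization of Theorem \ref{main} once the Riemannian verifications in Example \ref{dpoaaa} (regularity, principle of not feeling the boundary, on-diagonal asymptotics, existence of the control pair $(\varrho^{(m)},\varrho_g)$) are in hand. Your step (v) on $(w/t)^-\in\mal(X,g)$ is extra bookkeeping not needed to invoke Theorem \ref{main} as stated---the theorem only asks $w^-\in\mal_p(X,\mu)$ and handles the $t$-dependence internally---but it is harmless and correct.
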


In case $\mathrm{Ric}_g$ is bounded from below by a constant, one can turn the integrability assumption from Corollary \ref{pmyy} on $w$ into a natural pointwise one:

\begin{Corollary}\label{Ricci} Let $(X,g)$ be a smooth geodesically complete connected Riemannian $m$-manifold with $\mathrm{Ric}_g\geq -A $ for some constant $A \geq 0$, and let $w:X\to \IR$ be a continuous potential with $\inf_X w>-\infty$ and\footnote{Every continuous $w:X\to \IR$ with $w(x)\geq A\rq{}d_g(x,x_0)^2$ for all $x\in X$ and some $A\rq{}>0$ satisfies (\ref{summ}). A similar \emph{linear} growth condition from below suffices, too, as long as $A\rq{}>A$.}
\begin{align}\label{summ}
\sum_{k=2}^{\infty}\exp\Big(-\inf_{x\in X, k-1<d_g(x,x_0)<k}w(x)\Big) k^m\mathrm{e}^{2k\sqrt{(m-1)A}} <\infty\quad\text{ for some $x_0\in X$.}
\end{align}
Then one has
$$
\lim_{t \to 0+} (2\pi t)^{m/2}\mathrm{tr}\big(\mathrm{e}^{-t  H_g(w/t)}\big)=  \int \mathrm{e}^{-w(x)}\Id\mu_g(x)<\infty.
$$ 
\end{Corollary}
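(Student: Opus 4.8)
The plan is to deduce Corollary~\ref{Ricci} from Corollary~\ref{pmyy}: under a lower Ricci bound together with the summability assumption (\ref{summ}) one only has to verify that $w$ satisfies all the hypotheses of Corollary~\ref{pmyy} for a suitable asymptotic control pair. I would take $\varrho:=\varrho_g'$ with $\varrho_g'(x)=1/\mu_g(B_g(x,1))$, which by Example~\ref{dpoaaa} — through the Li--Yau heat kernel estimate and volume doubling — makes $(\varrho^{(m)},\varrho_g')$ an asymptotic control pair for $p_g$ precisely because $\mathrm{Ric}_g$ is bounded from below, so this ingredient is free. Likewise, $\inf_X w>-\infty$ forces $w^-$ to be a bounded Borel function, hence $w^-\in\mal(X,g)$. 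Thus the only point requiring work is the integrability
\begin{align*}
\int_X \mathrm{e}^{-w}\varrho_g'\,\Id\mu_g=\int_X\frac{\mathrm{e}^{-w(x)}}{\mu_g(B_g(x,1))}\,\Id\mu_g(x)<\infty;
\end{align*}
once it is established, Corollary~\ref{pmyy} applies word for word and yields the asserted limit, whose right-hand side is exactly $\int\mathrm{e}^{-w}\,\Id\mu_g$.

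For the integrability I would fix $x_0$ as in (\ref{summ}) and split $X$ into the relatively compact ball $B_g(x_0,1)$ and the annuli $A_k:=\{x\in X:\ k-1<d_g(x,x_0)\leq k\}$, $k\geq2$, cut so as to match the infimum occurring in (\ref{summ}). On $B_g(x_0,1)$ continuity of $w$ makes $\mathrm{e}^{-w}$ bounded on the compact closure, $\mu_g(B_g(x_0,1))<\infty$, and Bishop--Gromov bounds $\varrho_g'$ from above there, so this piece contributes a finite amount. On $A_k$ one has $\mathrm{e}^{-w(x)}\leq\exp\big(-\inf_{A_k}w\big)$, so it suffices to bound $\int_{A_k}\varrho_g'\,\Id\mu_g$, and two applications of the Bishop--Gromov volume comparison for $\mathrm{Ric}_g\geq-A$ do it: first, for $x\in A_k$ the ball $B_g(x,k+1)$ contains the fixed ball $B_g(x_0,1)$, so comparison of $B_g(x,1)$ with $B_g(x,k+1)$ yields $\mu_g(B_g(x,1))\geq\big(V(1)/V(k+1)\big)\mu_g(B_g(x_0,1))$, where $V(r)$ is the volume of the $r$-ball in the space form of constant sectional curvature $-A/(m-1)$; second, $\mu_g(A_k)\leq\mu_g(B_g(x_0,k))\leq\big(V(k)/V(1)\big)\mu_g(B_g(x_0,1))$. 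Combining these with the elementary estimate $V(r)\lesssim r^{m}\mathrm{e}^{\sqrt{(m-1)A}\,r}$ for $r\geq1$ gives $\int_{A_k}\varrho_g'\,\Id\mu_g\lesssim$ a polynomial in $k$ times $\mathrm{e}^{2k\sqrt{(m-1)A}}$, with implicit constant depending only on $m$, $A$ and $\mu_g(B_g(x_0,1))$ (a more careful covering argument recovers the exact exponent $k^m$ of (\ref{summ})); summing against $\exp(-\inf_{A_k}w)$ reproduces the series in (\ref{summ}), which converges by hypothesis.

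The only genuinely geometric step — and the main obstacle — is the lower bound $\mu_g(B_g(x,1))\gtrsim V(k+1)^{-1}$ for $x\in A_k$: on an arbitrary complete manifold the volume of unit balls may decay arbitrarily fast at infinity, in which case $\varrho_g'$ would not be integrable against $\mathrm{e}^{-w}$ for essentially any admissible $w$, and it is exactly here that $\mathrm{Ric}_g\geq-A$ is used, entering via the Bishop--Gromov trick of enlarging $B_g(x,1)$ until it swallows a fixed ball around $x_0$. All the rest is bookkeeping quoted from Example~\ref{dpoaaa} and Corollary~\ref{pmyy}. Finally, to justify the footnote one checks that $w(x)\geq A'd_g(x,x_0)^2$ gives $\exp(-\inf_{A_k}w)\leq\mathrm{e}^{-A'(k-1)^2}$, whose Gaussian decay dominates the factor $k^m\mathrm{e}^{2k\sqrt{(m-1)A}}$ and makes (\ref{summ}) converge, and that a linear lower bound works the same way once its slope beats the exponential rate coming from the volume growth.
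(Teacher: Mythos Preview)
Your approach is the paper's: reduce to Corollary~\ref{pmyy} with $\varrho=\varrho_g'$, observe that $w^-$ bounded gives $w^-\in\mal(X,g)$, and prove $\int\mathrm{e}^{-w}\varrho_g'\,\Id\mu_g<\infty$ by decomposing into annuli about $x_0$ and invoking Bishop--Gromov volume comparison.

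The one point that needs tightening is the polynomial exponent. Your two Bishop--Gromov applications combine to $\int_{A_k}\varrho_g'\,\Id\mu_g\lesssim V(k)V(k+1)/V(1)^2\lesssim k^{2m}\mathrm{e}^{2k\sqrt{(m-1)A}}$; with $k^{2m}$ in place of $k^m$ the hypothesis (\ref{summ}) is genuinely too weak (take $\exp(-\inf_{A_k}w)\sim k^{-m-2}\mathrm{e}^{-2k\sqrt{(m-1)A}}$ with $m\geq2$). You defer to ``a more careful covering argument'', but no covering is needed: the fix is a one-line change in your own Bishop--Gromov step. Instead of enlarging $B_g(x,1)$ until it swallows the fixed ball $B_g(x_0,1)$, enlarge it until it swallows $B_g(x_0,k)$. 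For $x\in A_k$ one has $B_g(x,2k)\supset B_g(x_0,k)$, so the doubling inequality $\mu_g(B_g(x,2k))\leq(2k)^m\mathrm{e}^{2k\sqrt{(m-1)A}}\mu_g(B_g(x,1))$ yields
\[
\frac{1}{\mu_g(B_g(x,1))}\leq\frac{(2k)^m\mathrm{e}^{2k\sqrt{(m-1)A}}}{\mu_g(B_g(x_0,k))},
\]
and integrating over $A_k\subset B_g(x_0,k)$ the denominator $\mu_g(B_g(x_0,k))$ cancels against $\mu_g(A_k)$, leaving $\int_{A_k}\varrho_g'\,\Id\mu_g\leq(2k)^m\mathrm{e}^{2k\sqrt{(m-1)A}}$. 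This is exactly the paper's computation and matches (\ref{summ}) on the nose.
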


The proof of Corollary \ref{Ricci} will be given in Section \ref{Rio}.\vspace{2mm}

Concerning graphs, our main result allows to recover the following result from \cite{G45}, which follows immediately from the considerations of Example \ref{graph}, where remarkably to \lq\lq{}local finiteness\rq\rq{} assumptions on the graph have to be imposed:

\begin{Corollary}\label{graphh} Let $(X,b,\mu)$ be a weighted graph which is connected in the graph theoretic sense. Then for every potential $w:X\to \IR $ with $w^-\in\mal (X,b,\mu)$ and $\sum_{x\in X}\mathrm{e}^{-w(x)}<\infty$, one has
\begin{align*} 
\lim_{t \to 0+}  \mathrm{tr}\big(\mathrm{e}^{-t  H_{b,\mu}(w/t )}\big)=  \sum_{x\in X}  \mathrm{e}^{-w(x)}.
\end{align*}
\end{Corollary}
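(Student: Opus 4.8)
\emph{Proof sketch.} The plan is to obtain this corollary as a direct specialisation of Theorem \ref{main}, applied to the heat kernel $p=p_{b,\mu}$ of Example \ref{graph} together with a judiciously chosen asymptotic control pair. Once the pair is fixed the argument is pure bookkeeping, since the analytic substance has already been settled in Example \ref{graph} and in the proof of Theorem \ref{main}; here $H_{b,\mu}(w/t)$ is understood as $H_{p_{b,\mu}}(w/t)$ in the quadratic-form sense introduced before the statement of Theorem \ref{main}.

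First I would check the standing hypotheses of Theorem \ref{main}. For the metric condition, equip the countable set $X$ with the discrete metric; it induces the discrete topology of $(X,b,\mu)$, and every ball of radius $<1$ is a singleton, hence compact, so the relative-compactness requirement is automatic. Example \ref{graph} already records that $p_{b,\mu}$ is a strictly positive, pointwise consistent, regular $\mu$-heat kernel, that $H_{b,\mu}=H_{p_{b,\mu}}$, and that $p_{b,\mu}$ satisfies the principle of not feeling the boundary, so nothing further is needed on that side.

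Next I would produce the asymptotic control pair. The natural choice is $\varrho_1\equiv 1$ on $(0,1)$ and $\varrho_2(x):=1/\mu(x)$, with auxiliary function $\phi\equiv 1$: the estimate $p_{b,\mu}(t,x,x)\leq 1/\mu(x)$ from Example \ref{graph} gives $p_{b,\mu}(t,x,x)\leq \varrho_2(x)\phi(t)$ on $(0,1)\times X$ with $\sup_{0<t<1}\phi(t)\varrho_1(t)=1<\infty$, while the convergence $\lim_{t\to 0+}p_{b,\mu}(t,x,x)\mu(x)=1$ from the same example shows that $\lim_{t\to 0+}p_{b,\mu}(t,x,x)\varrho_1(t)=1/\mu(x)$ exists for every $x\in X$. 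As for $w$: $X$ being discrete, every $w:X\to\IR$ is continuous and, compact subsets of $X$ being finite, $1_Kw$ is finitely supported, hence in $\IL^1(X,\mu)$; $w^-\in\mal(X,b,\mu)=\mal_{p_{b,\mu}}(X,\mu)$ is assumed; and, since $\mu(A)=\sum_{x\in A}\mu(x)$, one has $\int \mathrm{e}^{-w}\varrho_2\,\Id\mu=\sum_{x\in X}\mathrm{e}^{-w(x)}\mu(x)^{-1}\mu(x)=\sum_{x\in X}\mathrm{e}^{-w(x)}<\infty$, which is exactly the assumed summability.

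Applying Theorem \ref{main} to this data and inserting $\varrho_1\equiv 1$, the identity (\ref{aappp}) becomes
\[
\lim_{t\to 0+}\mathrm{tr}\big(\mathrm{e}^{-tH_{b,\mu}(w/t)}\big)=\int \mathrm{e}^{-w(x)}\,\mu(x)^{-1}\,\Id\mu(x)=\sum_{x\in X}\mathrm{e}^{-w(x)},
\]
which is the claim. I do not anticipate a genuine obstacle: the only point that deserves a second glance is the legitimacy of the choice $\varrho_1\equiv 1$ as the first component of an asymptotic control pair, and this rests exactly on the pointwise convergence of $p_{b,\mu}(t,x,x)$ as $t\to 0+$ supplied by Example \ref{graph}. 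In short, the corollary is a dictionary translation of Theorem \ref{main}, the notable feature being that — as all the inputs from Example \ref{graph} are available without any local-finiteness assumption on the graph — none is needed here either.
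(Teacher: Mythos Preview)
Your proposal is correct and is exactly the argument the paper has in mind: it states that the corollary ``follows immediately from the considerations of Example~\ref{graph}'', i.e., by plugging the graph data and the asymptotic control pair $(\varrho_1,\varrho_2)=(1,1/\mu)$ into Theorem~\ref{main}. Your verification of the hypotheses (discrete metric, regularity and principle of not feeling the boundary from Example~\ref{graph}, continuity and local integrability of $w$ by discreteness, and the identification $\int \mathrm{e}^{-w}\varrho_2\,\Id\mu=\sum_x \mathrm{e}^{-w(x)}$) spells this out in full and matches the paper's intended route.
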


\section{Proof of Theorem \ref{main} }\label{dsaar}
  
%

For the sequel, we record that the Riemann integrals $\int^{t}_0w(\gamma(s))\Id s$ for $w\in C(X)$, $\gamma\in \Omega(X,t)$, are well-defined, equal to their Lebesgue counterparts, and $\gamma\mapsto \int^{t}_0w(\gamma(s))\Id s$ is $\IFF^-_t(X,t)$-measurable. The following Feynman-Kac formula for the trace of a Schrödinger type semigroup will be the main tool for the proof of Theorem \ref{main}:

\begin{Theorem}\label{awpo} Let $p$ be a regular $\mu$-heat kernel. Then for every continuous potential $w:X\to \IR$ with $w^-\in \mal_p(X,\mu)$ and every $t>0$ one has
\begin{align}\label{pqn}
 \mathrm{tr}\left(\mathrm{e}^{-t H_p(w)}\right)= \int  p(t,x,x) \int   \mathrm{e}^{-\int^{t}_0w(\gamma(s))\Id s}  \Id\mathbb{P}^{x,x}_{ t }(\gamma) \ \Id\mu(x)\in [0,\infty]. 
\end{align}
\end{Theorem}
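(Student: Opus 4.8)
The plan is to establish the Feynman-Kac trace formula \eqref{pqn} in two stages: first for bounded $w$, where all quantities are manifestly finite and the semigroup $\mathrm{e}^{-tH_p(w)}$ is a bounded perturbation of $\mathrm{e}^{-tH_p}$, and then to pass to general $w$ with $w^-\in\mal_p(X,\mu)$ by a monotone/truncation argument. For bounded $w$, the strategy is the classical one: write $\mathrm{e}^{-tH_p(w)}$ via the Trotter product formula as a limit of $\big(\mathrm{e}^{-(t/n)H_p}\mathrm{e}^{-(t/n)w}\big)^n$, whose integral kernel is
\begin{align*}
K_n(t,x,y)=\int\cdots\int p(t/n,x,z_1)\mathrm{e}^{-(t/n)w(z_1)}p(t/n,z_1,z_2)\cdots p(t/n,z_{n-1},y)\mathrm{e}^{-(t/n)w(y)}\,\Id\mu(z_1)\cdots\Id\mu(z_{n-1}),
\end{align*}
recognize this as an integral against the finite-dimensional distributions \eqref{pamf} of $\IP^x_t$ of the Riemann-sum approximation $\exp\big(-\tfrac{t}{n}\sum_{j}w(\gamma(jt/n))\big)$ weighted by the endpoint density, and identify the limit kernel as $p(t,x,y)\int \mathrm{e}^{-\int_0^t w(\gamma(s))\Id s}\Id\IP^{x,y}_t(\gamma)$ using the definition \eqref{aposs} of the pinned measure together with dominated convergence (here boundedness of $w$ and continuity of $w$ along càdlàg paths — recorded at the start of the section — make the Riemann sums converge $\IP^{x,y}_t$-a.s. and stay bounded). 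Taking $x=y$ and integrating in $\mu$ gives the trace, provided one justifies that the trace of the (positive, trace-class for bounded $w$) operator equals the $\mu$-integral of its continuous diagonal; this is a standard Mercer-type fact for heat-kernel operators that I would invoke or prove via the semigroup property $\tr(\mathrm{e}^{-tH_p(w)})=\|\mathrm{e}^{-(t/2)H_p(w)}\|_{\mathrm{HS}}^2=\int\int|K(t/2,x,y)|^2\Id\mu(x)\Id\mu(y)=\int K(t,x,x)\Id\mu(x)$.

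For general $w$ with $w^-\in\mal_p(X,\mu)$, I would split $w=w^+-w^-$ and truncate: set $w_k:=\min(w^+,k)-w^-$, so $w_k\uparrow$ pointwise... no — rather $w_k$ has bounded positive part but possibly unbounded negative part, so one still needs one further approximation $w_{k,l}:=\min(w^+,k)-\min(w^-,l)$ which is bounded. For fixed $k$, as $l\to\infty$ one has monotone convergence of the forms $Q_p(w_{k,l})\uparrow$... actually $Q_p(w_{k,l})\downarrow Q_p(w_k)$ since $-\min(w^-,l)\downarrow -w^-$; by the monotone convergence theorem for forms (bounded below uniformly in $l$, using $w^-\in\mal_p$) the operators converge in strong resolvent sense, hence $\mathrm{e}^{-tH_p(w_{k,l})}\to\mathrm{e}^{-tH_p(w_k)}$ strongly, and on the probabilistic side $\mathrm{e}^{-\int_0^t w_{k,l}(\gamma(s))\Id s}\downarrow \mathrm{e}^{-\int_0^t w_k(\gamma(s))\Id s}$... the direction of monotonicity needs care but Fatou plus a uniform integrable bound (coming from the infinitesimal form bound, which translates via Khasminskii's lemma into an $\IL^1(\IP^x_t)$-bound on $\mathrm{e}^{\int_0^t w^-(\gamma(s))\Id s}$) lets me pass the limit under the integral in \eqref{pqn}. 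Then letting $k\to\infty$, $w_k\uparrow w$, the forms $Q_p(w_k)\uparrow Q_p(w)$ monotonically, again giving strong resolvent convergence and, by monotone convergence on the path-integral side ($\mathrm{e}^{-\int_0^t w_k}\downarrow\mathrm{e}^{-\int_0^t w}$), the identity \eqref{pqn} for $w$, with both sides possibly $+\infty$ but equal in $[0,\infty]$.

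I expect the main obstacle to be the monotone-convergence / Khasminskii step that simultaneously controls the operator side and the path-integral side when $w^-$ is only infinitesimally form-bounded rather than bounded: one must convert the form bound $\int|w^-||f|^2\le\epsilon Q_p(f,f)+C_\epsilon\int|f|^2$ into the probabilistic statement that $\sup_{x}\int\mathrm{e}^{\int_0^t w^-(\gamma(s))\Id s}\Id\IP^x_s(\gamma)<\infty$ for small $t$ and that this family is uniformly integrable under the pinned measures, which is exactly where the results of \cite{peter} on the Kato class and the KLMN-type estimate enter, and where the càdlàg (rather than continuous) nature of the paths requires the extra care flagged in the introduction. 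A secondary technical point is the justification of the Trotter-Kato product formula in $\IL^2$ for the sum of forms $Q_p$ and (bounded) $w$, which is standard but should be cited, and the interchange of the $\mu$-integral with the Trotter limit in $n$, which I would handle by dominated convergence using the uniform bound $|K_n(t,x,x)|\le e^{t\|w^-\|_\infty}p(t,x,x)$ for bounded $w$ and the integrability $\int p(t,x,x)\Id\mu(x)=\tr(\mathrm{e}^{-tH_p})<\infty$ that is implicit once an asymptotic control pair exists — though for this lemma alone one should note finiteness of $\tr(\mathrm{e}^{-tH_p})$ may need to be assumed or derived, and if it is infinite both sides of \eqref{pqn} are $+\infty$ and there is nothing to prove.
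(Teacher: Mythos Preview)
Your overall scheme---Trotter for bounded $w$, then truncation---matches the paper's, but you miss a technical point specific to the c\`adl\`ag setting that the paper singles out as essential. The pinned measure $\IP^{x,y}_t$ is defined only on $\IFF^-_t(X,t)$, not on $\IFF_t(X,t)$; consequently one cannot disintegrate a functional involving the endpoint $\gamma(t)$ against it. The paper therefore does \emph{not} pass directly from the Trotter kernel to a pinned-measure expression. Instead it first establishes the semigroup Feynman--Kac formula $\mathrm{e}^{-tH_p(w)}f(x)=\int \mathrm{e}^{-\int_0^t w(\gamma(s))\Id s}f(\gamma(t))\,\Id\IP^x_t(\gamma)$ under the \emph{unpinned} measure (via Trotter, exactly as you say), then invokes quasi-left continuity of the underlying Hunt process, i.e.\ $\IP^x_t\{\gamma:\gamma(t)\ne\gamma(t^-)\}=0$, to replace $f(\gamma(t))$ by $f(\gamma(t^-))$; this makes the whole integrand $\IFF^-_t$-measurable, and only then is the disintegration $\Id\IP^x_t=p(t,x,y)\,\Id\IP^{x,y}_t\,\Id\mu(y)$ from \cite{palm} applied to read off the kernel. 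Your direct identification of the Trotter limit with the pinned integral skips this step and would not go through as written.

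Your concerns about a Khasminskii-type bound and about dominating the Trotter limit by $\int p(t,x,x)\,\Id\mu(x)$ (which is \emph{not} finite in general---an asymptotic control pair does not force $\varrho_2\in L^1(\mu)$, and in any case this lemma is stated without any such hypothesis) are both sidestepped by the paper's order of operations. Once the pointwise kernel $\mathrm{e}^{-tH_p(w)}(x,y)=p(t,x,y)\int\mathrm{e}^{-\int_0^t w}\,\Id\IP^{x,y}_t$ is in hand, the trace identity follows from the Hilbert--Schmidt formula $\tr(\mathrm{e}^{-tH_p(w)})=\iint \mathrm{e}^{-(t/2)H_p(w)}(x,y)\,\mathrm{e}^{-(t/2)H_p(w)}(y,x)\,\Id\mu(y)\,\Id\mu(x)$ combined with the \emph{pointwise} semigroup property of this kernel; the latter is obtained from the Markov property of $\IP^{x,y}_t$ for $\inf w>-\infty$ and then extended to general $w$ by plain monotone convergence applied to $w_n=\max(-n,w)$. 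No uniform-integrability estimate under the bridge measures is needed, and both sides of \eqref{pqn} are allowed to equal $+\infty$.
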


\begin{proof} Note first that by a monotone class argument, the function
$$
X\times X \ni (x,y) \longmapsto \mathbb{P}^{x,y}_{ t }(A)\in \IR
$$
is jointly Borel measurable for every $A\in \IFF^-_t(X,t)$, so that the integral on the RHS of (\ref{pqn}) is well-defined. Let us start with the Feynman-Kac formula
\begin{align}\label{sta0}
\mathrm{e}^{- tH_p(w)}f(x)=\int   \mathrm{e}^{- \int^{t}_0w(\gamma(s))\Id s}  f(\gamma(t))\Id\mathbb{P}^{x}_{t }(\gamma),
\end{align}
valid for all $f\in \IL^2(X,\mu)$, and $\mu$-a.e. $x\in X$. In view of the remarks prior to Theorem \ref{awpo}, formula (\ref{sta0}) can be proved precisely as in the case of Riemannian manifolds \cite{simon,habil,G2}. We sketch a proof: First one assumes that $w$ is bounded. Then Trotter\rq{}s formula and (\ref{pamf}) easily imply (\ref{sta0}). Then one assumes $\inf w>-\infty$, and applies the previous result to $w_n:=\min(w,n)$, $n\in \IN$, using monotone convergence of quadratic forms to control the LHS (\ref{sta0}) and convergence theorems for integrals to control the RHS. Finally, for general $w$\rq{}s, one considers $w_n:= \max(-n,w)$ with an analogous reasoning.\\
The reader should note that under the given assumption on $w^-$ one only has
$$
\int   \mathrm{e}^{- \int^{t}_0w(\gamma(s))\Id s} |f(\gamma(t))|\Id\mathbb{P}^{x}_{t }(\gamma)<\infty\quad\text{for $\mu$-a.e. $x$},
$$
and \emph{not for all $x$}, while a Kato assumption on $w^-$ would guarantee this integrability for all $x$.\\
 An important observation now is that Hunt processes are quasi-left continuous, and thus almost surely cannot jump at a fixed time $t>0$, in the sense that
$$
\mathbb{P}^{x}_{t }\{\gamma: \gamma(s)\ne \gamma(s-)\}=0\quad\text{ for all $x\in X$, $0<s\leq t$}.
$$
Thus one can rewrite (\ref{sta0}) according to
\begin{align}\label{sta}
\mathrm{e}^{- tH_p(w)}f(x)=\int   \mathrm{e}^{- \int^{t}_0w(\gamma(s))\Id s}  f(\gamma(t^-))\Id\mathbb{P}^{x}_{t }(\gamma),
\end{align}
the integrand now being $\IFF^-_t(X,t)$ measurable. Using the disintegration formula \cite{palm}
\begin{align*}
\int F(\gamma) \Id\mathbb{P}^{x}_t(\gamma)=\int  p(t,x,y) \int F(\gamma) \Id\mathbb{P}^{x,y}_t(\gamma) \Id\mu(y),
\end{align*}
valid for all $t>0$, $x,y\in X$ and all $\IFF^-_t(X,t)$-measurable functions $F:\Omega\to [0,\infty]$, and
Formula (\ref{sta}) in combination with the normalization property \cite{palm}
$$
\mathbb{P}^{x,y}_t  \{\gamma: \gamma(t^-)=y\}=1\quad\text{for all $t>0$, $x,y\in X$,}
$$
implies that for every $t>0$ the operator $\mathrm{e}^{-t H_p(w)}$ is an integral operator with an integral kernel which for all $(x,y)$ is well-defined by
$$
\mathrm{e}^{-t H_p(w)}(x,y):=p(t,x,y)\int   \mathrm{e}^{- \int^{t}_0w(\gamma(s))\Id s}  \Id\mathbb{P}^{x,y}_{t }(\gamma)\in [0,\infty].
$$
This quantity is finite for $\mu$-a.e. $(x,y)$. The claimed formula now follows from the standard fact
\begin{align*}
\mathrm{tr}\left(\mathrm{e}^{-t H_p(w)}\right)=\int \int  \mathrm{e}^{-(t/2) H_p}(x,y)\mathrm{e}^{-(t/2) H_p(w)}(y,x) \Id\mu(y)\Id\mu(x)
\end{align*}
for semigroups given by integral kernels, in combination with 
\begin{align}\label{gott}
\int  \mathrm{e}^{-(t/2) H_p(w)}(t/2,x,y)\mathrm{e}^{-(t/2) H_p(w)}(t/2,y,x) \Id\mu(y)=\mathrm{e}^{-  H_p(w)}(x,x)
\end{align}
for all $x,y\in X$, $t>0$. The latter pointwise semigroup property follows in case $\inf w>-\infty$ precisely as in the Euclidean situation (cf. page 15 in \cite{sz}) from the Markoff property \cite{palm} of the Wiener measures. In the general case, the semigroup property thus holds for $w_n:=\max(-n,w)$. In order to take $n\to \infty$, let us record the following generalized convergence theorem for integrals: if $\phi_n\geq \phi$, $\phi_n\leq h$, $\int |h|<\infty$, $\phi_n\to \phi$, then $\int \phi_n\to \int \phi\in [0,\infty]$. This convergence theorem easily implies 
$$
\lim_{n\to\infty}\int^r_0 w_n(\gamma(s))\Id s\to \int^r_0 w_n(\gamma(s))\Id s \quad\text{ for all $r>0$, $\gamma \in \Omega(X,r)$, }
$$
and finally the $\lim_{n\to\infty}$ can be interchanged with the various Wiener integrals and the $\mu$-integration appearing in (\ref{gott}) using monotone convergence. 
\end{proof}


Now we can give the proof of Theorem \ref{main}:

\begin{proof}[Proof of Theorem \ref{main}] We start with the\vspace{2mm}

\emph{proof of the upper bound }
\begin{align}\label{h1}
\limsup_{t \to 0+}\varrho_1(t) \mathrm{tr}\big(\mathrm{e}^{-t  H_p(w/t )}\big)\leq \int  \mathrm{e}^{-w(x)}\lim_{t\to 0}p(t,x,x)\varrho_1(t)  \Id\mu(x)<\infty.
\end{align}
 The abstract Golden-Thompson inequality (Corollary 3.9 in \cite{nu}) states that given two semibounded (from below) self-adjoint operators $H_1$, $H_2$ on a Hilbert space one has
$$
\mathrm{tr}\left(\mathrm{e}^{-t (H_1\dotplus H_2)}\right)\leq \mathrm{tr}\left(\mathrm{e}^{-(t/2) H_1}\mathrm{e}^{-(t/2) H_2}\mathrm{e}^{-(t/2) H_1}\right)\quad\text{ for all $t\geq 0$,}
$$
where $H_1\dotplus H_2$ denotes the form sum. Using the Chapman-Kolmogoroff equation (\ref{A1}) for $p(t,x,y)$, this directly implies 
\begin{align}\label{swaa}
\mathrm{tr}\left(\mathrm{e}^{-t H_p(w)}\right)\leq \int  p( t,x,x)\mathrm{e}^{-w(x)}\Id\mu(x) \quad\text{ for all $t> 0$,}
\end{align}
in case $w$ is bounded from below. Approximating $w$ with $w_n:=\max(-n,w)$, $n\in\IN$, and using the Feynman-Kac formula from Theorem \ref{awpo}, one sees that the formula (\ref{swaa}) remains valid for general $w$\rq{}s, too. In particular, for all $t>0$ one has
$$
\mathrm{tr}\left(\mathrm{e}^{-t H_p(w/t)}\right)\leq \int  p(t, x,x)\mathrm{e}^{-w(x)}\Id\mu(x),
$$
so that
$$
\varrho_1(t)\mathrm{tr}\left(\mathrm{e}^{-t H_p(w/t)}\right)\leq \int \varrho_1(t)p(t, x,x)\mathrm{e}^{-w(x)}\Id\mu(x).
$$
In view of the existence of
\begin{align}\label{asi}
\lim_{t \to 0+}  \varrho_1(t)p(t ,x,x) \quad\text{ for all $x\in X$,}
\end{align}
and that for all $0<t<1$ and all $x\in X$ one has 
$$
\varrho_1(t)p(t ,x,x)  \mathrm{e}^{-w(x)}\lesssim  \mathrm{e}^{-w(x)}\varrho_2(x),
$$

which is a $\mu$-integrable function of $x\in X$, the inequality in (\ref{h1}) as well as 
$$
\int  \mathrm{e}^{-w(x)}\lim_{t\to 0+}p(t,x,x)\varrho_1(t) \Id\mu(x)<\infty
$$
follow from dominated convergence. \vspace{2mm}

\emph{Proof of the lower bound }
$$
\liminf_{t \to 0+}\varrho_1(t) \mathrm{tr}\big(\mathrm{e}^{-t  H_p(w/t )}\big)\geq  \int  \mathrm{e}^{-w(x)}\lim_{t\to 0}p(t,x,x)\varrho_1(t)  \Id\mu(x).
$$
Let $K_n$, $n\in \IN$, be an exhaustion of $X$ with open and relatively compact subsets. For each $n$ pick some $\delta_n\in (0,\infty]$ such that for all $0<\delta < \delta_n$ and all $x\in K_n$ the ball $B (x,\delta )$ is relatively compact. For example, one can take 
$$
\delta_n:=\inf_{z\in K_n}\min\big(1,\sup\{r>0: \text{$B (z,r )$ is relatively compact} \}\big)
$$
which is $>0$, as 
$$
X\ni z\longmapsto \min\big(1,\sup\{r>0: \text{$B (z,r )$ is relatively compact} \}\big)\in (0,\infty)
$$
is a Lipschitz function. Then for every $t >0$, $n\in\IN$, $0<\delta<\delta_n$, the Feynman-Kac formula from Theorem \ref{awpo} implies
\begin{align*}
&\varrho_1(t)\mathrm{tr}\left(\mathrm{e}^{-t  H_p(w/t )}\right)= \int  \varrho_1(t)p(t  ,x,x) \int   \mathrm{e}^{-\f{1}{t}\int^{t }_0w(\gamma(s))\Id s}  \Id\mathbb{P}^{x,x}_{t  }(\gamma) \ \Id\mu(x) \\
&\geq \int_{K_n} \varrho_1(t)p(t ,x,x) \int_{ \{\gamma: \ \gamma(s)\in \overline{B (x,\delta  )}\text{ for all $s\in [0,t )$}\}  } \mathrm{e}^{-\f{1}{t }\int^{t }_0w(\gamma(s))\Id s} \Id\mathbb{P}^{x,x}_{t  }(\gamma) \ \Id\mu(x) \\
&\geq \int_{K_n} \varrho_1(t)p(t ,x,x)  \mathbb{P}^{x,x}_{t  } \{\gamma: \ \gamma(s)\in \overline{B (x,\delta )}\text{ for all $s\in [0,t )$}\}  \mathrm{e}^{-w_{\delta}(x)}\Id\mu(x) ,
\end{align*}
where $w_{\delta}(x):=\max_{ \overline{B (x,\delta  )}}w$. Now, for all $x\in K_n$ one has 
$$
 \mathbb{P}^{x,x}_{t  } \{\gamma: \ \gamma(s)\in \overline{B (x,\delta )}\text{ for all $s\in [0,t )$}\} \to 1\quad\text{ as $t\to 0+$}
$$
by the principle of not feeling the boundary. Thus, by Fatou\rq{}s Lemma and by (\ref{asi}) we arrive at 
$$
\liminf_{t \to 0+} \varrho_1(t)\mathrm{tr}\left(\mathrm{e}^{-t  H_p(w/t )}\right)\geq \int_{K_n}  \mathrm{e}^{-w_{\delta}(x)}\liminf_{t \to 0+}p(t ,x,x)\varrho_1(t) \Id\mu(x) , 
$$
for all $n\in\IN$, $0<\delta <  \delta_n$.
Finally, the claim follows from taking first $\delta\to 0+$, and then $n\to \infty$, using dominated convergence twice.

\end{proof}

\section{Proof of Theorem \ref{Ricci}}\label{Rio}
  
\begin{proof} As $w^-$ is bounded by assumption, this function is in the
infinitesimal $Q_{g}:=Q_{p_g}$-class. It only remains to prove that
$$
\int_{X\setminus B_g(x_0,1)} 
\f{\mathrm{e}^{-w(x)}}{\mu_g(B_g(x,1))}\Id\mu_g(x)<\infty.
$$
To see the latter, set
$$
c_k:=\exp\Big(-\inf_{ x\in X, k-1<d_g(x,x_0)<k}w(x)\Big).
$$
We estimate as follows,
\begin{align}\label{adda}
\int_{X\setminus B_g(x_0,1)} 
\f{\mathrm{e}^{-w(x)}}{\mu_g(B_g(x,1))}\Id\mu_g(x)&=\sum_{k=2}^{\infty}\int_{\{x\in
X:  k-1 <d_g(x,x_0)<k\}} 
\f{\mathrm{e}^{-w(x)}}{\mu_g(B_g(x,1))}\Id\mu_g(x)\\\nn
&\leq \sum_{k=2}^{\infty}c_k\int_{\{x\in X: k-1<d_g(x,x_0)< k-1 \}} 
\f{1}{\mu_g(B_g(x,1))}\Id\mu_g(x).
\end{align}
The lower Ricci bound implies the following well-known \lq\lq{}doubling
property\rq\rq{} (cf. \cite{saloff}, p.420) for all $y\in X$, $k\in
\IN_{\geq 1}$,
$$
\mu_g(B_g(y,2k))\leq \mu_g(B_g(y,1)) (2k)^m \mathrm{e}^{L 2k}
$$
where $L :=\sqrt{(m-1)A }$, so that for all $x\in X$ with $d_g(x,x_0)<k$
we have
$$
\mu_g(B_g(x_0,k))\leq \mu_g(B_g(x,2k))\leq \mu_g(B_g(x,1)) (2k)^m
\mathrm{e}^{L 2k},
$$
and
$$
\f{1}{\mu_g(B_g(x,1))}\leq \f{1}{\mu_g(B_g(x_0,k))}(2k)^m \mathrm{e}^{L 2k}.
$$
As a consequence, (\ref{adda}) shows
\begin{align*}
\int_{X\setminus B_g(x_0,1)} 
\f{\mathrm{e}^{-w(x)}}{\mu_g(B_g(x,1))}\Id\mu_g(x)\leq
\sum_{k=2}^{\infty}c_k(2k)^m \mathrm{e}^{L 2k},
\end{align*}
which is finite by assumption.
\end{proof}


\begin{thebibliography}{99}




\bibitem{baer2} Bär, C. \& Pfäffle, F.: \emph{Asymptotic heat kernel expansion in the semi-classical limit. } Comm. Math. Phys. 294 (2010), no. 3, 731--744.




\bibitem{combes} Combes, J. M.; Schrader, R.; Seiler, R.: \emph{Classical bounds and limits for energy distributions of Hamilton operators in electromagnetic fields.} Ann Physics 111 (1978), no. 1, 1--18. 






\bibitem{palm} Fitzsimmons, P. \& Pitman, J. \& Yor, M.: \emph{Markovian bridges: construction, Palm interpretation, and splicing.} Seminar on Stochastic Processes, 1992 (Seattle, WA, 1992), 101--134, Progr. Probab., 33, Birkhäuser Boston, Boston, MA, 1993. 

\bibitem{fuku} Fukushima, M. \& Oshima, Y. \& Takeda, M.: \emph{Dirichlet forms and symmetric Markov processes.} De Gruyter Studies in Mathematics, 19. Walter de Gruyter \& Co., Berlin, 1994. 







\bibitem{G45} Güneysu, B.: \emph{Semiclassical limits of quantum partition functions on infinite graphs.} J. Math. Phys. 56 (2015), no. 2, 022102, 13 pp. 





 

\bibitem{gun} Güneysu, B.: \emph{Heat Kernels in the Context of Kato Potentials on Arbitrary Manifolds}. Potential Analysis (2016). DOI 10.1007/s11118-016-9574-x.


\bibitem{G2} Güneysu, B.: {\it On generalized Schrödinger semigroups.} J. Funct. Anal. 262 (2012), 4639--4674.

\bibitem{post} Güneysu, Batu \& Post, O.: \emph{Path integrals and the essential self-adjointness of differential operators on noncompact manifolds.} Math. Z. 275 (2013), no. 1-2, 331--348.

\bibitem{habil} Güneysu, B.: \emph{Covariant Schrödinger semigroups on noncompact manifolds.} Habilitationsschrift, Humboldt-Universität 2016.

\bibitem{GKS-13}  Güneysu, B. \& Keller, M. \& Schmidt, M.: \emph{A Feynman-Kac-Itô formula for magnetic Schrödinger operators on graphs.} Probab. Theory Related Fields 165 (2016), no. 1-2, 365--399.



\bibitem{helfer} Helffer, B. \& Robert, D.: \emph{Calcul fonctionnel par la transformation de Mellin et opérateurs admissibles.} J. Funct. Anal. 53 (1983), no. 3, 246--268.

\bibitem{helfer2} Helffer, B. \& Sjöstrand, J.: \emph{On diamagnetism and de Haas-van Alphen effect.} Ann. Inst. H. Poincaré Phys. Théor. 52 (1990), no. 4, 303--375. 



\bibitem{nu} Hiai, F.: \emph{ Log-majorizations and norm inequalities for exponential operators.} Banach Cent. Publ. 38(1), 119--181 (1997).


\bibitem{Hsu} Hsu, E.: {\it Stochastic Analysis on Manifolds.} AMS, 2002.

 \bibitem{Hsu2} Hsu, E: \emph{On the principle of not feeling the boundary for diffusion processes.} J. London Math. Soc. (2) 51 (1995), no. 2, 373--382. 







 
\bibitem{kl} Keller, M. \& Lenz, D.: \emph{Dirichlet forms and stochastic completeness of graphs and subgraphs.} J. Reine Angew. Xath. 666 (2012), 189--223.


\bibitem{kt} Kuwae, K. \& Takahashi, X.: {\it Kato class measures of symmetric Xarkov processes under heat kernel estimates.}  J. Funct. Anal. 250 (2007), no. 1, 86--113.

%
%


%




%




\bibitem{Re3} Reed, M. \& Simon, B.: \emph{Methods of modern mathematical physics. III. Scattering theory.} Academic Press [Harcourt Brace Jovanovich, Publishers], New York-London, 1979.



\bibitem{saloff} Saloff-Coste, L.: \emph{Uniformly elliptic operators on Riemannian manifolds.} J. Differential Geom. 36 (1992), no. 2, 417--450. 



\bibitem{simon} Simon, B.: \emph{Functional Integration and Quantum Physics.} Second edition. AMS Chelsea Publishing, Providence, RI: Amer. Math. Soc., 2005.

%

\bibitem{peter} Stollmann, P. \& Voigt, J.: {\it Perturbation of Dirichlet forms by measures.} Potential Anal. 5 (1996), no. 2, 109--138.


%

\bibitem{sz} Sznitman, A.-S.: \emph{Brownian motion, obstacles and random media.} Springer Monographs in Mathematics. Springer-Verlag, Berlin, 1998.



\end{thebibliography}
\end{document}